\newtheorem{theorem}{Theorem} [section]
\newtheorem{proposition}[theorem]{Proposition}
\newtheorem{lemma}[theorem]{Lemma}
\newtheorem{remark}[theorem]{Remark}
\newtheorem{definition}[theorem]{Definition}
\newtheorem{assumption}[theorem]{Hypothesis}
\numberwithin{equation}{section}
\DeclareRobustCommand\widecheck[1]{{\mathpalette\@widecheck{#1}}}
\def\@widecheck#1#2{%
    \setbox\z@\hbox{\m@th$#1#2$}%
    \setbox\tw@\hbox{\m@th$#1%
       \widehat{%
          \vrule\@width\z@\@height\ht\z@
          \vrule\@height\z@\@width\wd\z@}$}%
    \dp\tw@-\ht\z@
    \@tempdima\ht\z@ \advance\@tempdima2\ht\tw@ \divide\@tempdima\thr@@
    \setbox\tw@\hbox{%
       \raise\@tempdima\hbox{\scalebox{1}[-1]{\lower\@tempdima\box
\tw@}}}%
    {\ooalign{\box\tw@ \cr \box\z@}}}
\newcommand{\ep}{\epsilon}
\newcommand{\om}{\omega}
\renewcommand{\P}{{\mathbb P}}
\DeclareMathOperator{\sech}{sech}
\DeclareMathOperator{\Res}{Res}
\DeclareMathOperator{\sgn}{sgn}
\newcommand{\ds}{\displaystyle}
\newcommand{\be}{\begin{equation}}
\newcommand{\ee}{\end{equation}}
\newcommand{\bes}{\begin{equation*}}
\newcommand{\ees}{\end{equation*}}
\newcommand{\mand}{\quad \text{and}\quad}
\newcommand{\R}{{\bf{R}}}
\newcommand{\Z}{{\bf{Z}}}
\newcommand{\cSp}{{{S}_j^+}}
\newcommand{\cSm}{{{S}_j^-}}
\newcommand{\A}{{\mathcal{A}}}
\newcommand{\B}{{\mathcal{B}}}
\newcommand{\V}{{\mathcal{V}}}
\renewcommand{\O}{{\mathcal{O}}}
\newcommand{\I}{{\mathcal{I}}}
\newcommand{\Ex}{{\mathbb{E}}}
\renewcommand{\H}{{\mathcal{H}}}
\newcommand{\U}{{\mathcal{V}}}
\newcommand{\W}{{\mathcal{W}}}
\newcommand{\Pb}{\bar{P}}
\newcommand{\Qb}{\bar{Q}}
\newcommand{\Fb}{\bar{F}}
\newcommand{\Gb}{\bar{G}}
\renewcommand{\tilde}{\widetilde}
\renewcommand{\check}{\widecheck}
\newcommand{\bunderbrace}[2]{%
  \begin{array}[t]{@{}c@{}}
  \underbrace{#1}\\
  #2
  \end{array}
}
\title{Approximation of (some) Random FPUT Lattices by KdV Equations}
\author{Joshua A.~McGinnis}
 \address{University of Pennsylvania, Philadelphia}
 \author{J.~Douglas Wright}
 \address{Drexel University, Philadelphia}
\begin{document}
\begin{abstract} We consider a Fermi-Pasta-Ulam-Tsingou lattice with randomly varying coefficients. 
We discover a relatively simple condition which when placed on the nature of the randomness allows us to prove that small amplitude/long wavelength solutions are almost surely rigorously approximated by solutions of Korteweg-de Vries equations for very long times. The key ideas combine energy estimates with homogenization theory and the technical proof requires a novel application of autoregressive processes.
\end{abstract}
\maketitle

\section{Introduction}\label{introduction}

Consider a variable mass Fermi-Pasta-Ulam-Tsingou (FPUT) lattice\footnote{We write the equations as a first order system as opposed to its possibly more familiar second order form
\bes
m(j) \ddot{x}(j) = \V'( x(j+1) - x(j))-\V'(x(j) - x(j-1)).
\ees
The change of variables leading from this to \eqref{FPUT} is $q(j) = x(j+1)-x(j)$ and $p(j) = \dot{x}(j)$.}:
\be\label{FPUT}
\dot{q}(j,t) = \delta^+ p (j,t) \mand m(j)\dot{p}(j,t) = \delta^- [ \V'(q) ](j,t).
\ee
Here $t \in \R$, $j \in \Z$ and the unknowns $q(j,t)$ (the {\it relative displacement}) and $p(j,t)$ (the {\it velocity}) are real-valued. The {\it mass coefficients} $m(j)$ are strictly positive
and \be
\label{homog spring}\V(q):={1 \over 2} q^2 + {1 \over 3} q^3\ee
is the {\it spring potential}\footnote{This choice of the spring potential---which is an instance of the ``$\alpha$-potential'' from \cite{fermi-pasta-ulam}---is made mainly for simplicity. We could allow more complicated potentials and, so long as we had $\V'(0) >0$ and $\V''(0) \ne 0$, only minor changes to our results would occur.}.
 Lastly
$$
 \delta^+f(j) =f(j+1) - f(j) \mand \delta^-f(j) = f(j) - f(j-1)
$$
are the {\it right and left finite-difference operators}.

Models of this sort are ubiquitous in applications. A partial list: molecular dynamics,
lamination, nondestructive testing, vehicular traffic, granular media, metamaterials,  chemistry/biochemistry, and power generation \cite{pankov}. 
The system \eqref{FPUT} also plays a major role as a paradigm for the mathematical analysis of wave propagation---especially solitary waves---in nonlinear dispersive settings and it is the system's famous connection to the Korteweg-de Vries (KdV) equation wherein our interest lies.
Here are several important mathematical results about that connection:
\begin{itemize}
\item When $m(j)$ is constant, long-wavelength (say like $1/\ep$, where $0< \ep \ll 1$),
small-amplitude (order of $ \ep^2$) solutions are well-approximated over long time scales (order of $ 1/\ep^3$) by solutions of KdV equations. The relative $\ell^2$-error
 made by the approximation in this case is $\O(\ep)$. See \cite{zabusky-kruskal} for the earliest formal derivation and \cite{schneider-wayne} for
the first rigorous result.
\item The same sort of result holds when  $m(j)$ is $N$-periodic (that is, $m(j+N) = m(j)$ for all $j$). Indeed, the spring potentials may also be taken to be $N-$periodic (for instance, replace  $\V(q)$ with $\V_j(q) = \kappa(j) q^2/2 + \beta(j) q^3/3 $ where $\kappa(j)$
and $\beta(j)$ are $N-$periodic).
See~\cite{bruckner-etal,GMWZ}.
\end{itemize}

While there have been a few derivations of KdV  from random versions of the FPUT lattice previously (specifically \cite{iizuka,wadati}), all have been purely formal with no rigorous quantitative results. Even conjectures for the size of the error have been absent. We have been working for some time to remedy this.
In our article \cite{mcginnis-wright} we discovered that if the $m(j)$ are independent identically distributed (i.i.d.)~random variables then
 the accuracy of long-wavelength approximations is substantially diminished and, consequently, only shorter time scales and the linear problem (that is when $\V'(q) = q$) are within reach.  Precisely, we showed\footnote{We also allow for heterogeneity in $\V$ as well as in $m$ in \cite{mcginnis-wright}; our results apply to the case where $\V'(q)$ replaced by $\V'_j(q)= \kappa(j) q$ with $\kappa(j)$ another collection of i.i.d.~random variables.} that long-wavelength solutions (again like $1/\ep)$ converge almost surely and strongly, but rather slowly, to solutions of a wave equation on time scales on the order of $1/\ep$: the relative $\ell^2$-error 
made by the approximation is almost surely $\O(\sqrt{\ep \ln |\ln(\ep)|})$. Numerics indicate that 
our error estimate is close to sharp. In \cite{mcginnis}, McGinnis proved a similar result for a 2D lattice.

Furthermore, formal and numerical studies of random FPUT and other similar random lattice problems report that the waves in such systems experience a notable deterioration of their amplitude as time evolves (see, for instance, \cite{garnier-etal,flach-etal-1,flach-etal-2,martinez-etal}).
We have carried out our own simulations of the nonlinear problem \eqref{FPUT} with i.i.d.~random variables as coefficients in the long-wavelength/small amplitude regime.
These simulations demonstrate that 
 for time scales longer than $1/\ep$, solutions of \eqref{FPUT} attenuate substantially; KdV-like dynamics (namely, resolution into solitons) is not observed.  
 We include the results of our simulations below in Section \ref{numerics}, Figure \ref{avt}. 
{\it In short, we do not believe that when the coefficients are i.i.d.~random variables a KdV approximation is appropriate or possible.} 

However, there are more sorts of randomness than simply taking the coefficients to be i.i.d.. 
In this paper we consider the random case, but we restrict the randomness in such a way that we can prove a fully rigorous KdV approximation.
We believe that this is the first example of such a result involving randomness and nonlinear dispersive systems, though there are several earlier results which carefully derive---but do not fully justify---KdV  as an effective equation for the evolution of long water waves over randomly varying topography \cite{papa-rosales,craig-guyenne-sulem,craig-etal,craig-sulem}.

Here is our assumption on the 
the masses:
\begin{assumption}\label{mass assumption}
The masses are given by
\be\label{transparency} 
m(j) = 1 + \delta^+ \delta^- \zeta(j)
\ee
where  $\zeta(j)$, $j \in \Z$, are i.i.d.~random variables with zero mean, variance $\sigma^2$
and support contained in $(-1/4,1/4)$.
\end{assumption}

We refer to \eqref{transparency} as a {\it transparency condition} and
we call \eqref{FPUT} subject to Hypothesis~\ref{mass assumption} the {\it transparent random mass FPUT lattice.} The use of ``transparent'' here is due to an observation from simulations: if the masses meet this condition then waves propagate relatively cleanly through the lattice without too much ``back scattering'' or ``internal reflection."  Our idea for making this choice was inspired by the derivation of KdV as an effective equation for  water waves over a random bottom in \cite{papa-rosales} where the topography is given as a perfect spatial derivative. The condition on the support of $\zeta(j)$ is there to ensure that $m(j)$ are strictly positive (for if $|\zeta(j)| <1/4$ then the triangle inequality tells us $m(j)>0$). It also guarantees that $\sigma^2<\infty$.

Our main result in a nutshell: 
for suitable initial conditions, solutions of the transparent random mass FPUT lattice almost surely satisfy
\be\label{nutshell}
\begin{split}
q(j,t) & = \ep^2\left[ A(\ep(j-t),\ep^3t) +B(\ep(j+t),\ep^3t) \right]+\O_{\ell_2}(\ep^{2} \sqrt{|\ln(\ep)|})\\
p(j,t) & =\ep^2\left[ -A(\ep(j-t),\ep^3t) +B(\ep(j+t),\ep^3t) \right]+ \O_{\ell_2}(\ep^{2}\sqrt{|\ln(\ep)|})
\end{split}
\ee
for $|t| \le T_0/\ep^3$, where $A$ and $B$ solve the KdV equations
$$
2 \partial_T A + \left( {1 \over 12} +2 \sigma^2 \right)  \partial_w^3 A + \partial_w A^2 = 0
\mand
2 \partial_T B - \left( {1 \over 12} +2 \sigma^2 \right)  \partial_l^3 B -\partial_l B^2= 0.
$$
The fully technical version of our result appears in Theorem \ref{main theorem} below.
\begin{remark}
To the uninitiated, it may look like the size of the error exceeds the size of the  approximation. However, the long-wave scaling of the spatial coordinate gives $\|\ep^2A(\ep(\cdot-t),\ep^3t)\|_{\ell^2} = \O 
(\ep^{3/2})$ which indicates a relative $\ell^2$-error of $\O\left(\sqrt{\ep |\ln(\ep)|}\right)$. 
\end{remark}


Our paper is organized as follows. Section \ref{preliminaries} spells out some notation and other ground rules. Section \ref{approximation section} proves a general approximation theorem for \eqref{FPUT}. While motivated by KdV approximations, the result applies more broadly. Section \ref{derivation} contains the derivation of KdV from \eqref{FPUT} under Hypothesis \ref{mass assumption}; this is the heart of the paper. Section \ref{estimates} contains a multitude of estimates which set the stage for the application of the general approximation theorem. It is in this section where probability plays a major role and where the technical guts of our result live. Section \ref{main event} ties everything together with the statement and proof of  our main result, the technical version of \eqref{nutshell}. 
Then we present the result of supporting numerics in Section \ref{numerics}.
We close out with a big list of open questions in Section \ref{next}.

\noindent{\bf Acknowledgements:} The authors would like to thank Amanda French,  C.~Eugene Wayne and Atilla Yilmaz for helpful conversations related to this project. Also, JDW would like to recognize the National Science Foundation who supported this research with grant DMS-2006172.
 
\section{Preliminaries}\label{preliminaries}
\subsection{Function/sequence spaces}
For a doubly infinite sequence $f:\Z \to \R$ we put, as per usual,
$ \| f\|_{\ell^2} := \sqrt{\sum_{j \in \Z} f^2(j) }$ and
$ \| f\|_{\ell^\infty} := \sup_{j \in \Z} |f(j)|$. Of course $\ell^2$ and $\ell^\infty$ are the sets of all sequences where the associated norms are finite. If we write $\|f,g\|_{\ell^2}$ we mean $\|f\|_{\ell^2} + \|g\|_{\ell^2}$, the norm on $\ell^2 \times \ell^2$. The analogous convention applies to $\|f,g\|_{\ell^\infty}$.
For functions $F: \R \to \R$ and non-negative integers $n$ and $r$ we put
$$
\| F \|_{H^n(r)}:=\sqrt{ \int_\R (1+X^2)^r F^2(X) dX +  \int_\R (1+X^2)^r (\partial_X^nF)^2(X) dX}
$$
and $H^n(r)$ is the closure of the set of all smooth functions with respect to this norm.
 We define $H^n:=H^n(0)$, $L^2(r):=H^0(r)$ and $L^2:=H^0(0)$. Next,  $\| F\|_{W^{n,\infty}}:=\sup_{X \in \R} |F(X)|+|\partial_X^nF(X)|$ and $W^{n,\infty}$ is the associated function space. By $L^\infty$ we mean $W^{0,\infty}$.
All of the spaces listed above are Banach spaces.

\subsection{Probability}
All probabilistic components in the paper descend through the random variables $\zeta(j)$. Associated probabilities are represented by $\P$ and expectations by $\Ex$.

\subsection{$\O$, $o$ and $C$ notation}
We use the following version of Landau's ``big $\O$/little $o$'' notation.
Given two real-valued functions, $f(\ep)$ and $g(\ep)$, we say $f(\ep) = \O(g(\ep))$  if for some $C_\star>0$ and $\ep_\star>0$, $|f(\ep)| \le C_\star |g(\ep)|$ for $\ep \in (0,\ep_\star)$. We say
$f(\ep) = o(g(\ep))$ if $\lim_{\ep\to0^+} |f(\ep)/g(\ep)| = 0$.
If $Y$ is a Banach space and we have a family of elements $u_\ep \in Y$, we write $u_\ep = \O_Y(g(\ep))$ 
if
$\|u_\ep\|_{Y} = \O(g(\ep))$ 
by the earlier definition. 
If we write $f(\ep) = g(\ep) + \O(h(\ep))$ (or similar) we mean $f(\ep) - g(\ep) = \O(h(\ep))$.
We at times default to ``big $C$'' notation: if we simply write $f(\ep) \le C g(\ep)$ and omit qualifiers 
then we mean $f(\ep) = \O(g(\ep))$. 

Some quantities will depend on the random variables $\zeta(j)$. 
For such quantities, if we write $f(\ep) = \O(g(\ep))$ we mean this in an almost sure sense. 
Specifically, there exist constants $C_\star>0$ (almost surely finite) and $\ep_\star$ (almost surely positive)  such that $|f(\ep)| \le C_\star |g(\ep)|$ for $\ep \in (0,\ep_\star)$. 
The values of $C_\star$ and $\ep_\star$ may depend upon the realization of the $\zeta(j)$.
The same ``almost sure'' point of view holds for $\O_Y$ and $o$ too.

To be clear: we always mean $\O$, $o$ and their ilk rigorously, and we always mean them in the almost sure sense.

\section{Approximation in general}\label{approximation section}
We begin by proving a general approximation theorem
 using the strategy
described in Section 5.3 of \cite{GMWZ} (itself inspired by \cite{schneider-wayne}). 
Suppose that
for $\ep \in (0,1)$ we have some functions $\tilde{q}_\ep(j,t)$ and $\tilde{p}_\ep(j,t)$ (the {\it approximators}) that we expect
are good approximations to solutions of \eqref{FPUT} when $\ep$ is small. By this we mean that we know that 
$\tilde{q}_\ep(j,t)$ and $\tilde{p}_\ep(j,t)$ nearly solve \eqref{FPUT} in the sense that the {\it residuals}
\be\label{res def}
\Res_1:=  \delta^+ \tilde{p}_\ep -\partial_t \tilde{q}_\ep \mand
\Res_2:= {1 \over m} \delta^-\left[ \V'(\tilde{q}_\ep) \right]- \partial_t \tilde{p}_\ep
\ee
are small relative to $\ep$.
To validate the approximation over the time scale $|t| \le T_0 /\ep^{3}$ 
 we need information about
\be\label{alphas}\begin{split}
\alpha_1(\ep)&:=\sup_{|t|\le T_0/\ep^3} \|\tilde{q}_\ep,\tilde{p}_\ep\|_{\ell^2}, \\
\beta_1(\ep)&:=\inf_{|t| \le T_0/\ep^3} \|\tilde{q}_\ep,\tilde{p}_\ep\|_{\ell^2},\\
\alpha_2(\ep)&:= \sup_{|t|\le T_0/\ep^3}  \|\partial_t \tilde{q}_\ep\|_{\ell^\infty}  \mand\\
\alpha_3(\ep)&:= \sup_{|t| \le T_0/\ep^{3}} \|\Res_1\|_{\ell^2} + \|\Res_2\|_{\ell^2}.\end{split}\ee
In particular, we assume:
\be\label{alf}
\alpha_1(\ep) = o(1),\quad \alpha_2(\ep) = \O(\ep^3)\mand \alpha_3(\ep) = o (\beta_1(\ep) \ep^3).
\ee

Our goal is to show that if we have approximators with these features then the true solution of \eqref{FPUT} whose initial conditions are consistent with those of the approximators remains close  over the long time scale. The result we prove here is specialized to FPUT lattices where the spring potentials are homogeneous and of the form \eqref{homog spring},  but 
requires only the following non-degeneracy condition on the masses:
\be\label{mass ass}
\inf_{j \in \Z} m(j) > 0 \mand \sup_{j \in \Z} m(j) < \infty.
\ee
The condition on the support of $\zeta(j)$ in Hypothesis \ref{mass assumption} implies the above, though we do not require all of that hypothesis in this section.

Here is the result:
\begin{theorem} \label{approx thm}
Suppose that the mass coefficients satisfy \eqref{mass ass}, the approximators $\tilde{q}_\ep(j,t)$ and $\tilde{p}_\ep(j,t)$ meet \eqref{alf}
and the initial conditions for \eqref{FPUT} satisfy
$$
\|q(0)-\tilde{q}_\ep(0),p(0)-\tilde{p}_\ep(0)\|_{\ell^2} =\O\left({\alpha_3(\ep) \over \ep^{3}}\right).
$$
Then the solution $(q(t),p(t))$ of \eqref{FPUT} satisfies the {\bf absolute error estimate}
$$
\sup_{|t|\le T_0/\ep^3} \|q(t)-\tilde{q}_\ep(t),p(t)-\tilde{p}_\ep(t)\|_{\ell^2} = \O\left({\alpha_3(\ep) \over \ep^{3}}\right)
$$
as well as the {\bf relative error estimate}
$$
\sup_{|t|\le T_0/\ep^3} {\|q(t)-\tilde{q}_\ep(t),p(t)-\tilde{p}_\ep(t)\|_{\ell^2} \over 
 \|q(t),p(t)\|_{\ell^2}} = \O\left({\alpha_3(\ep)\over\beta_1(\ep) \ep^3}\right).
$$
\end{theorem}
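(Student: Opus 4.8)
The plan is to run a weighted energy estimate on the error, closed by a standard continuity (bootstrap) argument in the spirit of Schneider--Wayne and Section 5.3 of \cite{GMWZ}. Write $Q := q - \tilde{q}_\ep$ and $P := p - \tilde{p}_\ep$ for the error variables. Subtracting the residual relations \eqref{res def} from \eqref{FPUT} and using $\V'(r) = r + r^2$ (so that $\V'(q) - \V'(\tilde{q}_\ep) = Q + Q^2 + 2\tilde{q}_\ep Q$), I find that the errors solve the forced system
$$
\partial_t Q = \delta^+ P + \Res_1, \qquad m\,\partial_t P = \delta^-\LB Q + Q^2 + 2\tilde{q}_\ep Q \RB + m\,\Res_2.
$$
The true solution exists on the whole interval by $\ell^2$ well-posedness (the operators $\delta^\pm$ are bounded and the nonlinearity is polynomial) together with the a priori bound obtained below.

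The natural candidate is the mass-weighted energy $\E := \tfrac12 \sum_j \LC Q^2 + m P^2 \RC$, which by \eqref{mass ass} is equivalent to $\|Q,P\|_{\ell^2}^2$. Differentiating along the error system and summing by parts, the linear transport contributions $\sum_j Q\,\delta^+ P + \sum_j P\,\delta^- Q$ cancel exactly, leaving only the nonlinear term $\sum_j P\,\delta^-\LB Q^2 + 2\tilde{q}_\ep Q\RB$ together with the residual terms. The danger is that a direct estimate of this nonlinear term produces a growth rate proportional to $\|Q\|_{\ell^\infty} + \|\tilde{q}_\ep\|_{\ell^\infty}$, which is merely $o(1)$ by \eqref{alf}; multiplied against the time scale $1/\ep^3$ this is fatal, so a naive estimate fails and some structure must be exploited.

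The fix is to integrate the nonlinear term by parts once more and eliminate $\delta^+ P$ via the first error equation, $\delta^+ P = \partial_t Q - \Res_1$. The pieces $\sum_j Q^2\,\partial_t Q$ and $\sum_j \tilde{q}_\ep Q\,\partial_t Q$ then become perfect time derivatives of $\tfrac13\sum_j Q^3$ and $\sum_j \tilde{q}_\ep Q^2$ (the latter up to the remainder $-\sum_j(\partial_t\tilde{q}_\ep)Q^2$). This motivates the modified energy
$$
\tilde{\E} := \E + \tfrac13\sum_j Q^3 + \sum_j \tilde{q}_\ep Q^2, \qquad
\frac{d}{dt}\tilde{\E} = \sum_j (\partial_t\tilde{q}_\ep)\,Q^2 + \sum_j \LB Q + Q^2 + 2\tilde{q}_\ep Q\RB\Res_1 + \sum_j m P\,\Res_2.
$$
The crucial gain is that the only term carrying $\E$ at leading order is $\sum_j(\partial_t\tilde{q}_\ep)Q^2 \le \alpha_2 \|Q\|_{\ell^2}^2$, whose coefficient is $\O(\ep^3)$. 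Since $\|\tilde{q}_\ep\|_{\ell^\infty}\le\alpha_1 = o(1)$ and $\|Q\|_{\ell^\infty}$ is kept small on the bootstrap interval, the cubic corrections are negligible and $\tilde{\E}\simeq\E\simeq\|Q,P\|_{\ell^2}^2$. Bounding the residual terms by Cauchy--Schwarz (the quadratic and cubic pieces being higher order) gives $\tfrac{d}{dt}\tilde{\E} \le C\ep^3\,\tilde{\E} + C\alpha_3\sqrt{\tilde{\E}}$; setting $y=\sqrt{\tilde{\E}}$ and applying Gr\"onwall on $|t|\le T_0/\ep^3$ yields $y(t) \le e^{CT_0/2}\LC y(0) + C\alpha_3/\ep^3\RC$. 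The initial-data hypothesis gives $y(0) = \O(\alpha_3/\ep^3)$, which is the absolute estimate. The relative estimate then follows from the reverse triangle inequality $\|q,p\|_{\ell^2} \ge \beta_1(\ep) - \|Q,P\|_{\ell^2} = \beta_1(\ep)(1 - o(1))$, using $\alpha_3/\ep^3 = o(\beta_1)$.

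The main obstacle I expect is twofold and entirely about the structure rather than the arithmetic. First, the bookkeeping that isolates the $\O(\ep^3)$-coefficient term: the whole point of the modified energy $\tilde{\E}$ is to cancel the $o(1)$-rate term that a crude estimate leaves behind, replacing it with the harmless $\alpha_2$-term, and getting the cubic corrections to land as exact derivatives requires using the error equation at just the right moment. Second, the bootstrap must be set up so that $\|Q\|_{\ell^\infty}$ stays small enough to keep $\tilde{\E}$ equivalent to $\E$; this closes because the energy bound forces $\|Q\|_{\ell^2} = \O(\alpha_3/\ep^3) = o(1)$, which beats any fixed threshold once $\ep$ is small, so a continuity argument extends the estimate to the full time interval.
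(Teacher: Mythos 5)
Your proposal is correct and follows essentially the same route as the paper: your modified energy $\tilde{\E}=\sum_j\bigl(\tfrac12(1+2\tilde{q}_\ep)Q^2+\tfrac13 Q^3+\tfrac12 m P^2\bigr)$ is exactly the paper's functional $\H(\eta,\xi;t)$ built from $\W(a,b)$, and you reach the same summation-by-parts cancellation, the same differential inequality $\dot{H}\le C(\alpha_3 H^{1/2}+\ep^3 H)$, the same Gr\"onwall bound, and the same reverse-triangle-inequality step for the relative error. The only cosmetic difference is that you make the smallness bootstrap explicit, whereas the paper folds it into the hypothesis $\|u\|_{\ell^2}\le 1$ in its norm-equivalence statement \eqref{equiv}.
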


\begin{proof}
We introduce the {\it errors}:
$$
\eta:=q-\tilde{q}_\ep \mand \xi := p - \tilde{p}_\ep
$$
where $q(j,t)$ and $p(j,t)$ solve \eqref{FPUT}. 
Time differentiation of these expressions together with \eqref{FPUT} and some algebra get us:
\be\label{err}
\dot{\eta} =  \delta^+ \xi + \Res_1 \mand \dot{\xi} = {1 \over m}\delta^{-}(\W'(\eta,\tilde{q}_\ep)) 
+\Res_2.
\ee 
In the above
$$
\W(a,b):= \U(b+a)-\U(b)-\U'(b)a = {1 \over 2} (1+2 b) a^2 +{1 \over 3}a^3
$$
so that
$$
\W'(a,b):=\partial_a \W(a,b) = \U'(b+a)-\U'(b) = (1+ 2 b) a + a^2.
$$

Now we define the {\it energy} functional:
$$
\H(u,v;t):=\sum_{j \in \Z}\left( {1 \over 2} m_j v(j)^2 + \W(u(j),\tilde{q}_\ep(j,t))\right).
$$
In the above $(u,v)=(u(j),v(j))$ is  in $\ell^2\times \ell^2$.
Under our assumptions,
the square root of this quantity is equivalent to the $\ell^2 \times \ell^2$ norm in the following sense: 
 there exist $\ep_{*} \in (0,1)$ and  $C_*>1$ such that
\be\label{equiv}\begin{split}
&0 < \ep < \ep_{*},\ \|u\|_{\ell^2} \le 1,\ |t| \le T_0/\ep^3\\ \implies &
C_*^{-1} \| u,v\|_{\ell^2} \le \sqrt{\H(u,v;t)} \le C_*  \| u,v\|_{\ell^2}. 
\end{split}\ee

Here are the details. First of all, simple estimation gives
$$
{1 \over 2} \inf_{j \in \Z} m(j) \| v\|_{\ell^2}^2 \le  \sum_{j \in \Z} {1 \over 2} m_j v(j)^2 \le
{1 \over 2} \sup_{j \in \Z} m(j) \| v\|_{\ell^2}^2 
$$
and thus \eqref{mass ass} tells us that $\sqrt{   \sum_{j \in \Z} {1 \over 2} m_j v(j)^2}$ is equivalent to $\|v\|_{\ell^2}$. This gives the ``$v$'' part of \eqref{equiv}.

For the ``$u$'' part,  recall that $\| f \|_{\ell^\infty} \le \| f\|_{\ell^2}$ and so the assumption $\alpha_1(\ep) = o(1)$ implies that $\|\tilde{q}_\ep\|_{\ell^\infty} = o(1)$ as well. Thus for $\ep$ sufficiently small we have $\|\tilde{q}_\ep\|_{\ell^\infty} \le 1/30$.
This, in conjunction with the assumption that $\|u\|_{\ell^2} \le 1$, gives us
$$
\left \vert \sum_{j \in Z} \left( \tilde{q}_\ep(j,t) +{1 \over 3} u(j) \right) u(j)^2  \right \vert\le {11 \over 30} \|u \|_{\ell^2}^2.
$$
In turn the triangle inequality gives:
$$
{2 \over 15} \|u\|_{\ell^2}^2 \le 
\sum_{j \in Z}\bunderbrace{{1 \over 2} (1+2\tilde{q}_\ep(j,t))) u^2(j) + {1 \over 3} u^3(j)}
{\W(u(j),\tilde{q}_\ep(j,t))} \le {13 \over 15} \|u\|_{\ell^2}^2.
$$
So we have \eqref{equiv}.

For the next step, we suppose that $\eta(j,t)$ and $\xi(j,t)$ solve \eqref{err} and put
$H(t):=\H(\eta(t),\xi(t);t)$.
Differentiation of $H(t)$ with respect to $t$ gives:
$$
\dot{H}(t)=\sum_{j \in \Z}\left( m_j \xi(j,t)\dot{\xi}(j,t) + \W'(\eta(j,t),\tilde{q}_\ep(j,t) )\dot{\eta}(j,t) + {\partial_b \W}(\eta(j,t),\tilde{q}_\ep(j,t)) \partial_t\tilde{q}_\ep(j,t)
\right).
$$
Using \eqref{err} (and suppressing some dependencies) results in:
$$
\dot{H}=\sum_{j \in \Z}\left(  \xi  (\delta^{-}(\W'(\eta,\tilde{q}_\ep))+\Res_2) + \W'(\eta,\tilde{q}_\ep )(\delta^+ \xi + \Res_1) + {\partial_b \W}(\eta,\tilde{q}_\ep) \partial_t\tilde{q}_\ep
\right).
$$
We sum by parts and terms cancel:
$$
\dot{H}=\sum_{j \in \Z}\left(  \xi \Res_2 + \W'(\eta,\tilde{q}_\ep )\Res_1 + {\partial_b \W}(\eta,\tilde{q}_\ep) \partial_t\tilde{q}_\ep
\right).
$$
Subsequently, Cauchy-Schwarz and the like get us:
$$
\dot{H}\le \| \xi \|_{\ell^2} \|\Res_2\|_{\ell^2 }+ \| \W'(\eta,\tilde{q}_\ep )\|_{\ell^2}\|\Res_1\|_{\ell^2}+\|{\partial_b \W}(\eta,\tilde{q}_\ep)\|_{\ell^1} \| \partial_t \tilde{q}_\ep\|_{\ell^\infty} .
$$
One easily computes that
$
{\partial_b \W}(\eta,\tilde{q}_\ep)=\eta^2.
$
In which case we conclude, using the earlier formula for $\W'$ and routine estimates, that
\bes\label{E est 1}
\dot{H} \le \| \xi \|_{\ell^2} \|\Res_2\|_{\ell^2 }
+ \left(\left(1+2\| \tilde{q}_\ep\|_{\ell^\infty}\right)\|\eta\|_{\ell^2} +\|\eta\|_{\ell^2}^2\right)\|\Res_1\|_{\ell^2}
+\|\eta\|_{\ell^2}^2 \| \partial_t \tilde{q}_\ep\|_{\ell^\infty}.
\ees

Next we use \eqref{alf} to get:
\bes\label{E est 2}
\dot{H} \le 2 \alpha_3(\ep) \left( \|\eta\|_{\ell^2}+ \|\xi\|_{\ell^2} \right)+2\alpha_2(\ep) \|\eta\|_{\ell^2}^2.
\ees
Then we use \eqref{equiv}:
\bes\label{E est 3}
\dot{H} \le 2C^2_*\left(\alpha_3(\ep) H^{1/2} +\alpha_2(\ep) H\right).
\ees
 Since 
$\dot{H} = {2} H^{1/2} {d \over dt} H^{1/2}$ the above can be recast as
$$
{d \over dt} {H^{1/2}} \le C_*^2 \left(\alpha_3(\ep) +\alpha_2(\ep) H^{1/2}\right).
$$
We have assumed $\alpha_2(\ep) = \O(\ep^3)$ so the above implies
$$
{d \over dt} {H^{1/2}} \le C_*^2 \left(\alpha_3(\ep) +C_2 \ep^3 H^{1/2}\right)
$$
for a constant $C_2>0$.

An application of Gr\"onwall's inequality gets us:
$$
H^{1/2}(t) \le C_2^{-1} \left(e^{C_*^2C_2 \ep^3 t} - 1\right) {\alpha_3(\ep) \over \ep^3} + e^{C_*^2C_2 \ep^3 t} {H^{1/2}(0)}.
$$
Using \eqref{equiv} again: 
\bes\label{final}
\| \eta(t),\xi(t)\|_{\ell^2} \le C_*^2 C_2^{-1}\left(e^{C_*^2C_2 \ep^3 t} - 1\right) {\alpha_3(\ep) \over  \ep^3}+ C_*^2 e^{C_*^2C_2 \ep^3 t}\| \eta(0),\xi(0)\|_{\ell^2}.
\ees
We take the supremum of this over $|t| \le T_0/\ep^3$ and get
\bes
\sup_{|t| \le T_0 /\ep^3}\| \eta(t),\xi(t)\|_{\ell^2} \le C_\star \left( {\alpha_3(\ep) \over \ep^3} + \| \eta(0),\xi(0)\|_{\ell^2} \right).
\ees
The constant $C_\star>0$ is independent of $\ep$.

In conclusion, if we assume that 
\bes\label{ic ass}
\| \eta(0),\xi(0)\|_{\ell^2} = \O\left({\alpha_3(\ep)\over\ep^3}\right)
\ees
then we have shown
$$
\sup_{|t| \le T_0 /\ep^3}\| \eta(t),\xi(t)\|_{\ell^2} = \O\left({\alpha_3(\ep)\over\ep^3}\right).
$$
This is the absolute error estimate.
As for the relative error
%
a standard reverse triangle inequality argument shows that 
$$
\sup_{|t| \le T_0 /\ep^3}{ \| \eta(t),\xi(t)\|_{\ell^2} \over \|q(t),p(t)\|_{\ell^2}}= \O\left({\alpha_3(\ep)\over\beta_1(\ep)\ep^3}\right).
$$

\end{proof}

\section{Derivation of the effective equations}\label{derivation}
Now that we have Theorem \ref{approx thm}, we can move on to deriving the KdV equations from \eqref{FPUT}. 
The procedure for the derivation is a multiple scales expansion, inspired by \cite{donato}. We 
 assume the following form of our approximators:
\be\label{start}\begin{split}
\tilde{q}_\ep (j,t)  &= \sum_{n=0}^3 \ep^{n+2} Q_n(j,\ep j, \ep t, \ep^3 t)\mand
\tilde{p}_\ep (j,t)  = \sum_{n=0}^3 \ep^{n+2} P_n(j,\ep j, \ep t, \ep^3 t)
\end{split}\ee
where the $Q_n=Q_n(j,X,\tau,T)$ and $P_n=P_n(j,X,\tau,T)$ are maps 
$$
\Z \times \R \times \R \times \R \to \R.
$$
Of course we are viewing $\ep$ as being small.
Given that we put $X = \ep j$ in $\tilde{q}_{\ep}$ and $\tilde{p}_\ep$, we think of $X$ as being the {\it long-wave length scale} and $j$ being the {\it microscopic length scale.}

For expansions of the sort we are carrying out, it pays to be organized at the outset.
First we define the following operators for functions $U=U(j,X)$:
\bes\label{defs}\begin{split}
S^\pm_j U(j,X) := U(j\pm1,X) \mand
\delta_j^\pm U(j,X) :=  \pm \left(U(j\pm1,X) - U(j,X)\right).
\end{split}\ees
These are {\it partial shifts} and {\it partial finite-differences}  with respect to $j$.
Next, for $\ep >0$ put
\bes\begin{split}
D^+ U(j,X) := \pm(U(j\pm1,X\pm\ep)-U(j,X)).
\end{split}\ees
If $u(j) = U(j,\ep j)$ then $\delta^\pm u(j) = D^\pm U (j,\ep j).$ That is to say, $D^\pm$ are the {\it total finite-difference operators}.

Expanding the right-hand sides of $D^\pm U(j,X)$ in  (formal) Taylor series with respect to $\ep$ gives
$\ds
D^\pm U(j,X)  = \delta_j^\pm U(j,X) + \sum_{n=1}^\infty \ep^n{(\pm 1)^{n+1}  \over n!}S_j^\pm \partial_X^n U(j,X).
$
Truncating the sum at $n=M$ would give a formal error on the order of $\ep^{M+1}$ and 
so we define 
$$
\ep^{M+1} E^\pm_{M} := D^\pm - \delta_j^\pm - \sum_{n = 1}^M \ep^n{(\pm 1)^{n+1}  \over n!} S_j^\pm\partial_X^n.
$$
These operators give the exact error made by such a truncation. Note that for $M=0$ we just ignore the sum, {\it i.e.}~$\ep E_0^\pm := D^\pm-\delta_j^\pm$.

If we plug \eqref{start} into the residuals \eqref{res def} and carry out some substantial algebra we find 
that
\bes\begin{split}
\Res_1  &= \ep^2 Z_{12} + \ep^3 Z_{13} + \ep^4 Z_{14} + \ep^5 Z_{15} + \ep^6 W_1 \mand \\
\Res_2 &={1 \over m} \left( \ep^2 Z_{22} + \ep^3 Z_{23} + \ep^4 Z_{24} + \ep^5 Z_{25} + \ep^6 W_2\right)
\end{split}\ees
where
\bes\begin{split}
Z_{12}:=&\delta_j^+ P_0 \\
Z_{22}:=&\delta_j^- Q_0 \\
Z_{13}:=&\delta_j^+P_1 + \cSp \partial_X P_0 - \partial_\tau Q_0\\
Z_{23}:=&\delta_j^- Q_1 + \cSm  \partial_X Q_0 - m\partial_\tau P_0\\
Z_{14}:=&\delta_j^+P_2 + S_j^+\partial_X P_1 + {1 \over 2} S_j^+ \partial_X^2 P_0-\partial_\tau Q_1\\
Z_{24}:=& \delta_j^-Q_2 + S_j^-\partial_X Q_1 - {1 \over 2} S_j^- \partial_X^2 Q_0 + \delta_j^- Q_0^2- m\partial_\tau P_1\\
Z_{15}:=&\delta_j^+P_3 + S_j^+\partial_X P_2 + {1 \over 2} S_j^+ \partial_X^2 P_1+{1 \over 6} S_j^+ \partial_X^3 P_0\\
&-\partial_\tau Q_2-\partial_T Q_0 \\
Z_{25}:=&
 \delta_j^-Q_3 + S_j^-\partial_X Q_2 - {1 \over 2} S_j^- \partial_X^2 Q_1+{1 \over 6} S_j^- \partial_X^3 Q_0 \\&+ 2\delta_j^- (Q_0 Q_1) + \partial_X Q_0^2
-m\partial_\tau P_2-m\partial_T P_0 \\
\end{split}
\ees
and
\be\label{Ws}\begin{split}
W_1:=& \sum_{n=0}^3 E_{3-n}^+ P_n- \partial_\tau Q_3 - \sum_{n = 1}^3 \ep^{n-1} \partial_TQ_n
\\
W_2:=&  \sum_{n=0}^3 E_{3-n}^- Q_n + E_1^- Q_0^2 +2 E_0^- (Q_0 Q_1) \\
&+D^-\left(2 Q_0 Q_2 + \left(Q_1+\ep Q_2 + \ep^2 Q_3\right)^2
\right)
- m\partial_\tau P_3 - m\sum_{n = 1}^3 \ep^{n-1} \partial_TP_n.
\end{split}
\ee

The usual way to proceed is to select the $Q_0,P_0,\dots,Q_3,P_3$ so that
each $Z_{1k}=Z_{2k} = 0$. In this case we would have $\Res_1= \ep^6 W_1$ and $\ds \Res_2 = {1 \over m} \ep^6 W_2$ which we can then estimate using the formulas for the $Q_n$ and $P_n$. This strategy works perfectly well in the homogeneous and periodic problems as all the terms are rigorously the size they formally appear to be, modulo an annoying factor of $\ep^{-1/2}$ caused by the long-wave scaling.
But it fails in the random problem; the randomness leads to terms which are much larger  than they appear. 

Our modified strategy is to solve 
$$
Z_{12} = Z_{22} = Z_{13}=Z_{23} = Z_{14} = Z_{24} = 0
$$
(which will largely determine $Q_0,P_0,\dots,Q_2,P_2$) 
and then to do ``something else'' for $Z_{15}$ and $Z_{25}$.  
At the end of this, we find that
$
\Res_1= \ep^5 Z_{15} + \ep^6 W_1$ and $\ds \Res_2 = {1 \over m} \left(\ep^5Z_{25} + \ep^6W_2\right).
$
In  Section \ref{estimates} we show that these are $\O_{\ell^2} (\ep^5 \sqrt{|\ln(\ep)|})$. This is enough to apply Theorem \ref{approx thm} and get the error estimates shown in \eqref{nutshell}. 

\subsection{A tutorial on solving $Z_{1k} =Z_{2k}= 0$}\label{tutorial}
Each pair of equations $Z_{1k} =Z_{2k}= 0$ will have the form
\be\label{genZ}\begin{split}
\delta_j^+ P_{k-2} &= \Fb_0(X,\tau,T) + \sum_{n=1}^N f_n(j) \Fb_n(X,\tau,T)\\
\delta_j^- Q_{k-2} &= \Gb_0(X,\tau,T) + \sum_{n=1}^N g_n(j) \Gb_n(X,\tau,T).
\end{split}\ee
The sequences $f_n(j)$ and $g_n(j)$ are mean-zero random variables which come, in one way or another, from $\zeta(j)$; they depend only on the microscale coordinate. The $\Fb_n$ and $\Gb_n$ functions do not depend on the microscale coordinate at all. They will be made up of pieces of the various $P_n$ and $Q_n$ where $n < k - 2$. In this way  \eqref{genZ} allows us to figure out $P_{k-2}$ and $Q_{k-2}$ from the earlier functions.

We decompose \eqref{genZ} into a ``long-wave'' part (those pieces that do not depend on the microscale coordinate $j$ at all) and a ``microscale'' part (those that do). The long-wave part just consists of the terms $\Fb_0$ and $\Gb_0$ and so we set
\be\label{lwp}
\Fb_0 = 0 \mand  \Gb_0 =0.
\ee
This is a sort of solvability condition that will wind up giving us the long-wave dynamics; how it all plays out will be seen when we get in the weeds below. 

The microscale part is what is left over:
\be\label{msp}\begin{split}
\delta_j^+ P_{k-2} = \sum_{l=1}^N f_n(j) \Fb_n(X,\tau,T)\mand 
\delta_j^- Q_{k-2} =  \sum_{l=1}^N g_n(j) \Gb_n(X,\tau,T).
\end{split}\ee
We can just write down a solution for this:
\be\label{genZsol}\begin{split}
P_{k-2}(j,X,\tau,T) &= \Pb_{k-2}(X,\tau,T) +  \sum_{l=1}^N \chi_n(j) \Fb_n(X,\tau,T)
\mand\\
Q_{k-2}(j,X,\tau,T) &= \Qb_{k-2}(X,\tau,T) +  \sum_{l=1}^N \kappa_n(j) \Gb_n(X,\tau,T)
\end{split}\ee
where we select $\chi_n$ and $\kappa_n$ so that
\bes\label{grw}
\delta^+ \chi_n = f_n \mand \delta^- \kappa_n = g_n.
\ees
Solving these equations for $\chi_n$ and $\kappa_n$ from $f_n$ and $g_n$ is one of the key steps in the whole procedure and as we shall show the transparency condition makes this a relatively easy affair...at least at first. 
The functions $\Pb_{k-2}(X,\tau,T)$ and $\Qb_{k-2}(X,\tau,T)$ are  ``constants of integration''; in most cases we determine these from \eqref{lwp} at a later point in the derivation.

Now we get into actually solving the equations.

\subsection{$Z_{12} = Z_{22} = 0$}
These read
$
\delta_j^+ P_0 = 0$ and $\delta_j^- Q_0 = 0
$
which tells us that
\be\label{Q0P0}
Q_0(j,X,\tau,T) = \bar{Q}_0(X,\tau,T) \mand P_0(j,X,\tau,T) = \bar{P}_0(X,\tau,T).
\ee

\begin{remark}
In this section any function with a ``bar'' on top will not depend on $j$. We make this convention so that we do not need to perpetually clutter up our algebra with functional dependencies. For the same reason it is helpful to keep in mind that $m$ and $\zeta$ depend only on $j$ and not on the other variables.
\end{remark}

\subsection{$Z_{13} = Z_{23} = 0$}\label{Z13}
Using  \eqref{Q0P0} these equations become
\bes\label{z130}
\delta_j^+P_1=  \partial_\tau \bar{Q}_0 - \partial_X \bar{P}_0  \mand 
\delta_j^- Q_1 = m \partial_\tau \bar{P}_0 - \partial_X \bar{Q}_0. 
\ees
Using the transparency condition \eqref{transparency} converts these to
\bes\label{z13}
\delta_j^+P_1=  \partial_\tau \bar{Q}_0 - \partial_X \bar{P}_0  \mand 
\delta_j^- Q_1 =  \partial_\tau \bar{P}_0 - \partial_X \bar{Q}_0 + \delta^+ \delta^- \zeta  \partial_\tau \bar{P}_0.
\ees
Following the steps from the tutorial in Section \ref{tutorial} we see that the long-wave part \eqref{lwp} of these equations is
\be\label{wave}\begin{split}
  \partial_\tau \bar{Q}_0 - \partial_X \bar{P}_0 =0\mand
\partial_\tau \bar{P}_0 - \partial_X \bar{Q}_0=0.
\end{split}\ee
This is  the wave equation wearing a fake mustache and glasses and we readily solve it:
\be\label{AB}
\bar{Q}_0=A(X-\tau,T) + B(X+\tau,T) \mand \bar{P}_0 = -A(X-\tau,T) + B(X+\tau,T).
\ee
\begin{remark}
We use the convention that $$w= X- \tau \mand l = X+\tau$$ so that $A = A(w,T)$ and $B = B(l,T)$.
Note that $A$ and $B$ do not depend on $j$. It is these functions that will ultimately solve KdV equations. 
\end{remark}
After \eqref{wave} we are left with the microscale part
\bes\label{z132}
\delta_j^+P_1=  0  \mand 
\delta_j^- Q_1 =   \delta^+ \delta^- \zeta  \partial_\tau \bar{P}_0.
\ees
The solution formula \eqref{genZsol} gives
$Q_1 = \bar{Q}_1 + \chi \partial_\tau\Pb_0$ where we want $\delta^- \chi = \delta^+ \delta^- \zeta$. Finding $\chi$ is easily done as we can simply cancel a $\delta^-$ from both sides and put $\chi = \delta^+\zeta$. This is so simple  because of the transparency condition \eqref{transparency} and this is one of the reasons we have assumed it.
Likewise \eqref{genZsol} says that we should put $P_1 = \Pb_1$, but it will turn out that $\Pb_1$ will be zero so we just enforce that now. In short we have
\be\begin{split}\label{Q1P1}
Q_1 = \bar{Q}_1 + \delta_j^+ \zeta \partial_\tau \bar{P}_0 \mand
P_1 = 0.
\end{split}\ee
Note that $\delta_j^+\zeta$ is bounded in $j$ because of the compact support assumption in Hypothesis \ref{mass assumption}.

\begin{remark}\label{wi1} What if we had not made the transparency assumption but instead assumed that 
$m(j) = 1 + z(j)$ where $z(j)$ are i.i.d.~ mean zero random variables? The long-wave part is the same as above but now the microscale part is $\delta_j^- Q_1 =  z  \partial_\tau \bar{P}_0.$ To use the solution formula \eqref{genZsol} we would want to find $\chi$ so that $\delta^- \chi = z$,
or rather
\bes
\label{rw}
\chi(j) = \chi(j-1) + z(j).
\ees
This equation tells us that
$\chi(j)$ is a random walk with steps given by  $z(j)$ and as such we expect $\chi$ 
to grow like $\sqrt{j}$. 

To see why this is an issue, notice that $Q_1$ would include the term
$
\chi(j) A_w(X-\tau,T),
$
which  then would show up in the approximator \eqref{start} as
$
\ep^3\chi(j) A_w(\ep(j-t),\ep^3 t).
$
The term $A_w$ is propagating to the right with roughly unit speed and thus when $t \sim 1/\ep^3$ will be located at $j \sim 1/\ep^3$. In turn this indicates  $\chi(j) \sim \ep^{-3/2}$ towards the end of the approximation time interval. Hence the term $\ep^3\chi A_w$ would be substantially larger than it appears: the techniques from \cite{mcginnis-wright} show that almost surely
$$
\sup_{|t| \le T_0/\ep^{3}}\| \ep^3\chi(\cdot) A_w(\ep(\cdot-t),\ep^3 t)\|_{\ell^2}
\le C \ep \sqrt{\ln|\ln(\ep)|}.
$$
Were $\chi = \O_{\ell^\infty}(1)$ the right-hand side of the preceding estimate would be $C\ep^{5/2}$ (see Lemma \ref{lwa} below). And so we find that 
the ``$\ep^3$ term'' in the approximator is more than an order of magnitude larger than it should be, bigger in fact that the leading order term in the approximation. Disaster!
 
The lesson learned: if a term in our approximation involves a random walk it will ultimately be at least $\ep^{-3/2}$ larger than it formally appears to be. We call this difficulty  \bf{a random walk disaster}.
\end{remark}

\subsection{$Z_{14} = Z_{24} = 0$} 
The relations \eqref{transparency}, \eqref{Q0P0}, \eqref{wave},  \eqref{Q1P1}
and a little algebra convert these equations to
\bes\begin{split}
\delta_j^+ P_2 &= \partial_\tau \bar{Q}_1  - {1 \over 2} \partial_{X\tau}^2 \bar{Q}_0
+ \delta_j^+ \zeta\partial_X^2 \bar{P}_0\mand
\delta_j^- Q_2 = - \partial_X \bar{Q}_1 + {1 \over 2} \partial_X^2 \bar{Q}_0
 -\delta_j^- \zeta \partial^2_{X} \bar{Q}_0.
\end{split}\ees
The long-wave part of this is 
\bes\begin{split}
0 &= \partial_\tau \bar{Q}_1  -  {1 \over 2} \partial_{X\tau}^2 \bar{Q}_0
\mand 
0 = - \partial_X \bar{Q}_1 + {1 \over 2} \partial_X^2 \bar{Q}_0
\end{split}\ees
which can be solved by putting
\be\label{bQ1bP1}
\Qb_1 = {1 \over 2} \partial_X \Qb_0={1 \over 2} \left(\partial_wA +\partial_l B\right).
\ee

This leaves
 the microscale part
\be\begin{split}\label{dur}
\delta_j^+ P_2 &=  \delta_j^+ \zeta\partial_X^2 \bar{P}_0\mand
\delta_j^- Q_2 =
 -\delta_j^- \zeta \partial^2_{X} \bar{Q}_0.
\end{split}\ee
Which as per \eqref{genZsol} we solve by
taking
\be\begin{split}\label{Q2P2f}
P_2 &= \bar{P}_2 +\zeta \partial_{X}^2 \bar{P}_0 \mand
Q_2 = \bar{Q}_2 - \zeta \partial_{X}^2 \bar{Q}_0.
\end{split}\ee   

Once again, the transparency condition \eqref{transparency} made finding this solution a simple matter of cancelation; it is the reason why the transparency condition has two finite-differences on $\zeta$. If we had put only one finite-difference in \eqref{transparency} then another random walk disaster as described in Remark \ref{wi1} would occur when we solve \eqref{dur}.

\subsection{Something else for $Z_{15}$ and $Z_{25}$}
The relations \eqref{transparency}, \eqref{Q0P0},  \eqref{wave}, \eqref{Q1P1},
 \eqref{bQ1bP1},  \eqref{Q2P2f} and quite a bit of algebra get us:
\bes\label{Z5s}\begin{split}
Z_{15}=&-
\partial_T \Qb_0 
-  \partial_\tau  \Qb_2   + \partial_X \Pb_2 
  + {1 \over 6}  \partial_X^3\Pb_0\\&+\delta_j^+ P_3  +  \left( \zeta 
 +S_j^+ \zeta 
 \right)  \partial_X^3\Pb_0\\
Z_{25}=&    
-\partial_T \Pb_0 -  \partial_\tau  \Pb_2 +  \partial_X \Qb_2
 -\left( {1 \over 12}  -2 \sigma^2
\right) \partial_X^3 \Qb_0  +\partial_X(\Qb_0^2)
\\&  
+\delta_j^- Q_3-\left(\zeta  + S_j^-\zeta +{1 \over 2} \delta_j^- \zeta
+ \zeta {\delta_j^+ \delta_j^- \zeta}
 +2 \sigma^2
\right) \partial_X^3 \Qb_0 \\&
  +  \delta_j^- \delta_j^+ \zeta \partial_X (\Qb^2_0)- {\delta_j^+ \delta_j^- \zeta} \left(\partial_T \Pb_0 +  \partial_\tau  \Pb_2 \right).
\end{split}\ees
Recall that $
\sigma^2
$ is the variance of $\zeta(j)$. 
We need $Z_{15}$ and $Z_{25}$ to be small relative to $\ep$, but zeroing them out completely happens to be too restrictive; we will need to modify the microscale part of the decomposition described in Section \ref{tutorial}. 
But before we get there we deal with the long-wave part.

\subsubsection{Kill the long-wave part with KdV equations}
As per normal, we zero out the long-wave parts of $Z_{15}$ and $Z_{25}$. We have conveniently arranged all such terms in the first line on the right of the preceding formulas for $Z_{15}$ and $Z_{25}$ and so we put
\be\begin{split}\label{protoKdV}
0 & =  
-\partial_T \Qb_0 
-  \partial_\tau  \Qb_2   + \partial_X \Pb_2 + {1 \over 6} \partial_X^3\Pb_0\\
0& =  
-\partial_T \Pb_0 -  \partial_\tau  \Pb_2 +  \partial_X \Qb_2 
-\left( {1 \over 12}  -2 \sigma^2
\right) \partial_X^3 \Qb_0 + \partial_X(\Qb_0^2).
\end{split}\ee


Within \eqref{protoKdV} lurk the KdV equations; here is how we coax them into the daylight. Let 
\begin{equation*}\begin{split}
\Qb_2 (X,\tau,T) &= A_2(X-\tau,X+\tau,T) + B_2(X-\tau,X+\tau,T)\\\mand
\Pb_2 (X,\tau,T) &= -A_2(X-\tau,X+\tau,T) + B_2(X-\tau,X+\tau,T)
\end{split}\end{equation*}
and use \eqref{AB} in \eqref{protoKdV} to get
\bes\begin{split}
0  =&  
\partial_T A + \partial_T B
+  2 \partial_l A_2  - 2 \partial_w B_2
- {1 \over 6} (-\partial_w^3A + \partial_l^3 B)\\
0 =&  
-\partial_T A + \partial_T B   
-2 \partial_l A_2 - 2\partial_w B_2
+\left( {1 \over 12}  -2 \sigma^2
\right) (\partial_w^3A + \partial_l^3 B) \\&- \left(\partial_w A^2 + 2 \partial_w A B + 2 A \partial_l B + \partial_l B^2 \right).
\end{split}\ees
Subtracting these gives:
\be\label{almost there}\begin{split}
0  =&  
2 \partial_T A + \left( {1 \over 12} +2 \sigma^2 \right)  \partial_w^3 A + \partial_w A^2\\
 + &4 \partial_l A_2
-\left( {1 \over 4}  -2 \sigma^2
\right) \partial_l^3B  + \left(  2 \partial_w A B + 2 A \partial_l B
+\partial_l B^2  \right).
\end{split}\ee
If we let $\B$ be an $l$-antiderivative of $B$ (specifically $\B(l,T) := \int_0^l B(y,T) dy$) and set
 \be\label{A2}
   A_2 = {1 \over 4} \left[
   \left( {1 \over 4} - 2 \sigma^2\right) \partial_l^2B -
   \left(2 \partial_w A \B+ 2 A B + B^2
   \right)
   \right]
\ee
many terms in \eqref{almost there} die. What survives is
\be\label{KdVA}\begin{split}
0=2 \partial_T A + \left( {1 \over 12} +2 \sigma^2 \right)  \partial_w^3 A + \partial_w A^2.
\end{split}\ee
This is a KdV equation!
A parallel argument (after adding instead subtracting equations a few steps above) shows we should take 
\be\label{B2}
   B_2 = {1 \over 4} \left[
\left( {1 \over 4}  -2 \sigma^2
\right) \partial_w^2A  - \left( A^2 + 2 A B + 2 \A \partial_l B  \right)\right]\\
\ee
with $\A$ a $w$-antiderivative of $A$ (specifically $\A(w,T): = \int_0^w A(y,T) dy$).
In which case we get that $B$ solves another KdV equation:
\be\label{KdVB}\begin{split}
0=2 \partial_T B - \left( {1 \over 12} +2 \sigma^2 \right)  \partial_l^3 B -\partial_l B^2.
\end{split}\ee
To summarize: taking $A$, $B$, $A_2$ and $B_2$ as we have just described 
means that \eqref{protoKdV} is satisfied.

%
%
%

\subsubsection{Handle with the microscopic part using autoregressive processes.}
The next step in dealing with $Z_{15}$ and $Z_{25}$ is to control the microscopic parts that are left over after \eqref{protoKdV}:
\be\label{Z52}\begin{split}
Z_{15}=&\delta_j^+ P_3  +  \left( \zeta 
 +S_j^+ \zeta 
 \right)  \partial_X^3\Pb_0\\
Z_{25}=&\delta_j^- Q_3-\left(\zeta  + S_j^-\zeta +{1 \over 2} \delta_j^- \zeta
+ \zeta {\delta_j^+ \delta_j^- \zeta}
 +2 \sigma^2
\right) \partial_X^3 \Qb_0 \\&
  +  \delta_j^- \delta_j^+ \zeta \partial_X (\Qb^2_0)- {\delta_j^+ \delta_j^- \zeta} \left(\partial_T \Pb_0 +  \partial_\tau  \Pb_2 \right).
\end{split}\ee
Many, but not all, of these terms in $Z_{25}$ can be eliminated with the same cancelation tricks that worked earlier. To see this, we let
\bes\begin{split}
P_3&= \gamma_1 \partial_X^3 \Pb_0 \mand \\
 Q_3&=\left(\gamma_2 +{1 \over 2} \zeta\right)
 \partial_X^3 \Qb_0 
  - \delta_j^+ \zeta \partial_X (\Qb^2_0)+ {\delta_j^+ \zeta} \left(\partial_T \Pb_0 +  \partial_\tau  \Pb_2 \right)
\end{split}\ees
where for the moment we leave $\gamma_1 = \gamma_1(j)$ and $\gamma_2 = \gamma_2(j)$  
unspecified. Substituting the above into \eqref{Z52} gives
\be\label{Z525}\begin{split}
Z_{15}=&\left[\delta_j^+\gamma_1+\zeta 
 +S_j^+ \zeta 
 \right]  \partial_X^3\Pb_0\\
Z_{25}=&\left[\delta_j^- \gamma_2-\left(\zeta  + S_j^-\zeta 
+ \zeta {\delta_j^+ \delta_j^- \zeta}
 +2 \sigma^2
\right)\right] \partial_X^3 \Qb_0 .
\end{split}\ee

If we followed the strategy from the tutorial in Section \ref{tutorial}, we would put $\delta_j^+\gamma_1=-\zeta 
 -S_j^+ \zeta$ and
$ \delta^- \gamma_2=  \zeta+S^-\zeta + \zeta \delta^+ \delta^- \zeta+ 2 \sigma^2$ 
 and get $Z_{15}=Z_{25}=0$. Since the $\zeta(j)$ are i.i.d.~random variables we would then find that $\gamma_1(j)$ and $\gamma_2(j)$ are random walks, which leads us to another disaster as described in Remark \ref{wi1} (this time in the residual terms).
 Why not just stack another finite-difference on $\zeta$ in the transparency condition? This would help in $Z_{15}$ but would not be useful in handling the parts stemming from $\zeta {\delta_j^+ \delta_j^- \zeta}$ in $Z_{25}$.

To avoid these problematic random walks we take $\gamma_1$ and $\gamma_2$ to solve 
\be\begin{split}\label{AR1}
\delta^+ \gamma_1&= -\ep\sgn(j) \gamma_1 -\left( \zeta+S^+\zeta \right) \mand \\
\delta^- \gamma_2&= -\ep\sgn(j) \gamma_2+\left( \zeta+S^-\zeta + \zeta \delta^+ \delta^- \zeta+ 2 \sigma^2\right).
\end{split}\ee
In which case we find that \eqref{Z525} becomes
\bes\label{Z153}
Z_{15}= -\ep\sgn(j) \gamma_1\partial_X^3\Pb_0 \mand 
Z_{25} = -\ep \sgn(j) \gamma_2 \partial_X^3 \Qb_0.
\ees

The extra factors of $\ep$ on the right-hand sides here means that our choices of $P_3$ and $Q_3$ are formally as good as putting $Z_{15} = Z_{25} = 0$. 
Estimates for $P_3$ and $Q_3$ (and consequently $Z_{15}$, $Z_{25}$ and the residuals) ultimately require us to understand $\gamma_1$ and $\gamma_2$. The equations in \eqref{AR1} are examples of {\it autoregressive processes} \cite{grimmett-stirzaker}. These are dissipative cousins of random walks 
and with classical probabilistic methods we will show that they roughly cost us a factor of $\ep^{-1/2}$ (see Lemma \ref{gamma est} below) instead of the $\ep^{-3/2}$ we get from using random walks. This is big but not too big for our estimates to handle.

\subsection{Summing up.}
At this point we have completely determined all the functions $P_0,\dots,Q_3$ in the approximation. As it can be challenging to sort through it all,
we close out this section by summarizing the derivation.
\begin{definition}\label{full approx}
Suppose $A(w,T)$ and $B(l,T)$ solve the KdV equations \eqref{KdVA} and \eqref{KdVB} and  $\gamma_1(j)$ and $\gamma_2(j)$ solve the autoregressive processes \eqref{AR1}. Take $A_2(w,l,T)$ and $B_2(w,l,T)$ as in  \eqref{A2} and \eqref{B2}. Define $Q_k(j,X,\tau,T)$ and $P_k(j,X,\tau,T)$ via
\bes
{\begin{tabular} { l || l }

$Q_0 = A+ B$, & $P_0 = - A + B$\\
$Q_1 = {1 \over 2} \partial_X Q_0 + \delta_j^+ \zeta \partial_\tau P_0$, & $P_1 = 0$\\
$Q_2 = A_2 + B_2 - \zeta \partial_X^2 Q_0$ & $P_2=-A_2 + B_2 + \zeta \partial_X^2 P_0$\\
$\begin{aligned}
Q_3 = &\left(\gamma_2 +{1 \over 2} \zeta\right) \partial_X^3 Q_0 
         - \delta_j^+ \zeta \partial_X (Q^2_0)\\+& {\delta_j^+ \zeta} \left(\partial_T P_0 -  \partial_\tau  A_2 +\partial_\tau B_2\right)\end{aligned} $ & $\begin{aligned}P_3 = &\gamma_1 \partial_X^3 P_0\\ &\phantom{boo} \end{aligned}$\\

\end{tabular}}
\ees
where it is understood that $w = X-\tau$ and $l=X+\tau$. Then we
call $\tilde{q}_\ep(j,t)$ and $\tilde{p}_\ep(j,t)$, as defined in \eqref{start}, the {\bf extended KdV approximators}.
\end{definition}

In this section we have proven:
\begin{lemma}\label{residual calculation}
The extended KdV approximators 
have
$$
\Res_1= \ep^6\left(-\sgn(j) \gamma_1 \partial_X^3 P_0 + W_1\right)\mand
\ds \Res_2 = {\ep^6\over m} \left(- \sgn(j) \gamma_2 \partial_X^3 Q_0+ W_2\right)
$$
with $W_1$ and $W_2$ given at \eqref{Ws}.
\end{lemma}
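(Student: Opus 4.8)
The plan is to establish this lemma by direct substitution and careful bookkeeping, assembling the computations already carried out across the preceding subsections. The starting point is the $\ep$-expansion of the residuals obtained by plugging the ansatz \eqref{start} into the residual definitions \eqref{res def}, which produces the terms $Z_{1k}$, $Z_{2k}$ ($k = 2,\dots,5$) together with $W_1$ and $W_2$. Granting that expansion---the genuinely laborious ingredient, since it requires the transparency condition \eqref{transparency} to rewrite $m$ and the Taylor expansion of the total difference operators $D^\pm$ through the error operators $E^\pm_M$---the residuals read
\bes
\Res_1 = \ep^2 Z_{12} + \ep^3 Z_{13} + \ep^4 Z_{14} + \ep^5 Z_{15} + \ep^6 W_1
\ees
together with the analogous expression for $\Res_2$ carrying the prefactor $1/m$.

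First I would verify that the choices recorded in Definition \ref{full approx} force $Z_{12} = Z_{22} = 0$, $Z_{13} = Z_{23} = 0$ and $Z_{14} = Z_{24} = 0$. Each is a matter of substituting the relevant $Q_n, P_n$ into the formula for $Z_{1k}, Z_{2k}$ and confirming the cancellations: for $k = 2$ this is immediate because $Q_0, P_0$ are $j$-independent; for $k = 3$ one uses the wave equation \eqref{wave} to annihilate the long-wave part and the identity $\delta^-(\delta^+\zeta) = \delta^+\delta^-\zeta$ (available thanks to transparency) for the microscale part; for $k = 4$ one invokes the choice \eqref{bQ1bP1} of $\Qb_1$ together with the solution \eqref{Q2P2f} for $Q_2, P_2$.

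Next I would handle $k = 5$. The substance here is that the KdV equations \eqref{KdVA} and \eqref{KdVB}, together with the definitions \eqref{A2} and \eqref{B2} of $A_2$ and $B_2$, zero out the long-wave part of $Z_{15}$ and $Z_{25}$---that is, \eqref{protoKdV} holds---while the stated choices of $P_3, Q_3$ reduce the microscale part to \eqref{Z525}. Substituting the autoregressive equations \eqref{AR1} for $\gamma_1, \gamma_2$ then collapses these to
\bes
Z_{15} = -\ep\,\sgn(j)\,\gamma_1 \partial_X^3 \Pb_0 \mand Z_{25} = -\ep\,\sgn(j)\,\gamma_2 \partial_X^3 \Qb_0.
\ees
Because $\Pb_0 = P_0$ and $\Qb_0 = Q_0$ are $j$-independent, feeding these back into the $\ep$-expansion---where the first four $Z$-terms now vanish---gives $\Res_1 = \ep^5 Z_{15} + \ep^6 W_1 = \ep^6\left(-\sgn(j)\gamma_1 \partial_X^3 P_0 + W_1\right)$, and symmetrically for $\Res_2$, which is exactly the assertion.

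The main obstacle is not a single conceptual hurdle but the sheer volume of error-prone algebra needed to derive the $Z_{1k}, Z_{2k}$ formulas in the first place, in particular tracking the interaction of the partial shifts $S_j^\pm$, the partial differences $\delta_j^\pm$, and the multiple-scale derivatives $\partial_\tau, \partial_T, \partial_X$ while repeatedly using $m = 1 + \delta^+\delta^-\zeta$ and $\delta^\pm(\delta^\mp\zeta) = \delta^+\delta^-\zeta$. Once those formulas are trusted, the lemma is essentially a bookkeeping statement: everything of order $\ep^2$ through $\ep^4$ cancels by construction, the order-$\ep^5$ contribution is precisely the autoregressive remainder, and the order-$\ep^6$ contribution is the leftover $W$.
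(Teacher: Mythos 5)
Your proposal is correct and follows essentially the same route as the paper: the paper's ``proof'' of this lemma is exactly the Section~\ref{derivation} derivation you reconstruct---expand the residuals into the $Z_{1k},Z_{2k}$ and $W_1,W_2$ terms, check that the choices in Definition~\ref{full approx} kill $Z_{12},\dots,Z_{24}$ and the long-wave parts of $Z_{15},Z_{25}$ via \eqref{protoKdV}, and use the autoregressive equations \eqref{AR1} to reduce the microscale remainders to $-\ep\,\sgn(j)\gamma_1\partial_X^3 P_0$ and $-\ep\,\sgn(j)\gamma_2\partial_X^3 Q_0$, whereupon the extra factor of $\ep$ upgrades the $\ep^5$ terms to $\ep^6$. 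One trivial quibble: the identity $\delta^-\delta^+\zeta=\delta^+\delta^-\zeta$ is just commutativity of the difference operators and holds always, independent of the transparency condition (transparency is what makes the right-hand side a perfect $\delta^-$ so that $\chi=\delta^+\zeta$ can be read off by cancelation).
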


We move on to proving many estimates related to the extended KdV approximators.

\section{Estimates on the approximators and residuals}\label{estimates}
%
%
%
To streamline some of the forthcoming statements we put forth the following convention:
\begin{definition}
We say $A$ and $B$ are {\bf good solutions of KdV on $[-T_0,T_0]$} if they satisfy \eqref{KdVA} and \eqref{KdVB} along with the estimate
$$
0<\sup_{|T| \le T_0} \|A(\cdot,T)\|_{H^7(1)} 
+ \|B(\cdot,T)\|_{H^7 (1)}  < \infty.
$$
\end{definition}

\begin{remark} The existence of good solutions of KdV on intervals of arbitrary length 
is by now classical (see \cite{tao}). 
The lower bound is just to guarantee that the approximation is not trivial.
\end{remark}

In this section we prove:
\begin{proposition}\label{big prop} 
Assume Hypothesis \ref{mass assumption}.
Let $\tilde{q}_\ep(j,t)$ and $\tilde{p}_\ep(j,t)$ be the {extended KdV approximators}
as in Definition \ref{full approx} where we further assume that that $A$ and $B$ are good solutions of KdV on $[-T_0,T_0]$.
Then almost surely the quantities defined at \eqref{alphas} satisfy
\bes\label{set yr goals}
\alpha_1(\ep) = \O(\ep^{3/2}),\
\alpha_2(\ep) = \O(\ep^3), \ \alpha_3(\ep) = \O(\ep^5 \sqrt{|\ln(\ep)|})\ {\text{ and }}\ \beta^{-1}_1(\ep) = \O(\ep^{-3/2}).
\ees
\end{proposition}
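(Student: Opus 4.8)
The plan is to estimate each of the four quantities $\alpha_1,\alpha_2,\alpha_3,\beta_1^{-1}$ separately by examining the explicit formulas for $Q_0,\dots,Q_3,P_0,\dots,P_3$ in Definition \ref{full approx} and the residual formula from Lemma \ref{residual calculation}. The central difficulty throughout is the long-wave scaling: a term of the form $\ep^N F(j)\,G(\ep(j-t),\ep^3 t)$ with $F$ bounded in $j$ does \emph{not} have $\ell^2$-norm of size $\ep^N$, but rather $\ep^{N-1/2}$, because the sum over $j$ acts like an integral with a Jacobian factor of $\ep^{-1}$. I would isolate this as a preliminary lemma (the paper refers to this as Lemma \ref{lwa}): for $G(\cdot,T) \in L^2$ and $F \in \ell^\infty$ one has $\|\ep^N F(\cdot) G(\ep(\cdot - t),\ep^3 t)\|_{\ell^2} \le C\,\ep^{N-1/2}\|F\|_{\ell^\infty}\|G(\cdot,T)\|_{L^2}$, uniformly for $|t|\le T_0/\ep^3$ using the good-solution bound on $A,B$. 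With such a lemma in hand, the bulk of the argument is bookkeeping.

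First I would handle $\alpha_1(\ep) = \O(\ep^{3/2})$. The leading term in $\tilde q_\ep,\tilde p_\ep$ is $\ep^2 Q_0 = \ep^2(A+B)$, and applying the long-wave lemma with $N=2$, $F\equiv 1$ gives exactly $\O(\ep^{3/2})$; every higher-order term $\ep^{n+2}Q_n$ with $n\ge 1$ contributes at worst $\O(\ep^{n+3/2})$, \emph{provided} the microscale coefficients appearing in $Q_n,P_n$ are bounded in $\ell^\infty$ uniformly in $j$. For $Q_1,Q_2,P_2$ this is immediate since $\delta_j^\pm\zeta$ and $\zeta$ are bounded by Hypothesis \ref{mass assumption} (compact support). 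The subtle terms are $Q_3$ and $P_3$, which carry the autoregressive factors $\gamma_1,\gamma_2$; here I would invoke the promised estimate on the $\gamma$'s (Lemma \ref{gamma est}), which asserts $\|\gamma_i\|_{\ell^\infty} = \O(\ep^{-1/2}\sqrt{|\ln\ep|})$ almost surely — the dissipative autoregressive recursion \eqref{AR1} tames the random walk down from $\ep^{-3/2}$ to roughly $\ep^{-1/2}$. Thus $\ep^5 Q_3 \sim \ep^5 \cdot \ep^{-1/2} \cdot \ep^{-1/2} = \O(\ep^{4})$ in $\ell^2$, comfortably smaller than the leading $\ep^{3/2}$. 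The bound $\beta_1^{-1}(\ep)=\O(\ep^{-3/2})$ follows from the same leading-order computation together with the strict positivity in the ``good solution'' definition, after checking that the microscale corrections cannot cancel the $\ep^2 Q_0$ term to leading order (a reverse-triangle-inequality argument).

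Next, $\alpha_2(\ep)=\O(\ep^3)$ concerns $\|\partial_t\tilde q_\ep\|_{\ell^\infty}$. Since $\partial_t$ acting on $Q_n(j,\ep j,\ep t,\ep^3 t)$ produces $\ep\,\partial_\tau + \ep^3\partial_T$, the leading contribution is $\ep^2\cdot\ep\,\partial_\tau Q_0 = \O(\ep^3)$ in $\ell^\infty$ (no long-wave Jacobian penalty here, since we use the sup norm and the $\gamma$-bearing terms enter only at order $\ep^5$ and higher). The last and hardest estimate is $\alpha_3(\ep)=\O(\ep^5\sqrt{|\ln\ep|})$. By Lemma \ref{residual calculation} the residuals are $\ep^6(-\sgn(j)\gamma_i\,\partial_X^3(\cdot) + W_i)$, with the $1/m$ factor in $\Res_2$ harmless because $m$ is bounded above and below by \eqref{mass ass}. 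The $W_i$ terms in \eqref{Ws} involve the truncation-error operators $E_M^\pm$ and products of $Q_n$'s; applying Taylor's theorem to bound $E_M^\pm U$ by $\sup$ of higher $X$-derivatives of $U$ (which exist by the $H^7(1)$ regularity), then the long-wave lemma, gives $\|\ep^6 W_i\|_{\ell^2} = \O(\ep^{11/2})$ — smaller than required. The binding contribution is the $\ep^6\sgn(j)\gamma_i\,\partial_X^3 Q_0$ piece: here $\gamma_i\in\ell^\infty$ with the $\ep^{-1/2}\sqrt{|\ln\ep|}$ bound, and the long-wave lemma with $N=6$ yields $\ep^{6}\cdot\ep^{-1/2}\sqrt{|\ln\ep|}\cdot\ep^{-1/2} = \O(\ep^5\sqrt{|\ln\ep|})$, matching the claim exactly.

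The main obstacle is the estimate on the autoregressive processes $\gamma_1,\gamma_2$ in $\ell^\infty$, i.e. establishing that \eqref{AR1} grows no faster than $\ep^{-1/2}\sqrt{|\ln\ep|}$ almost surely and uniformly over the $\sim\ep^{-3}$ relevant lattice sites. This is where the probability genuinely enters: one must analyze the stationary-in-distribution AR(1)-type recursion with the small dissipation rate $\ep$, bound its typical size (stationary variance $\sim\ep^{-1}$, hence $\ell^\infty$ fluctuations $\sim\ep^{-1/2}$), and then control the maximum over $\O(\ep^{-3})$ correlated sites via a Borel–Cantelli / maximal-inequality argument that produces the extra $\sqrt{|\ln\ep|}$ and delivers the ``almost surely'' qualifier. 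Everything else is a careful but routine tabulation of orders through the formulas in Definition \ref{full approx}; I would defer the $\gamma$-analysis to its own lemma and treat it as the probabilistic heart of the section.
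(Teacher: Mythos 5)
Your overall architecture matches the paper's (long-wave scaling lemma, order-by-order tabulation, a separate probabilistic lemma for the autoregressive terms), but there is a genuine gap at the probabilistic heart. You assert $\|\gamma_i\|_{\ell^\infty} = \O(\ep^{-1/2}\sqrt{|\ln\ep|})$ almost surely, and then estimate $\gamma$-bearing terms by factoring this sup norm out of the long-wave lemma. That bound is false: the correct statement (the paper's Lemma \ref{gamma est}) is the pointwise growth estimate $|\gamma_i(j)| \le C\ep^{-1/2}\sqrt{\ln(e+|j|)}$, which is \emph{unbounded} in $j$ --- an AR(1) process with dissipation rate $\ep$ has stationary fluctuations of size $\ep^{-1/2}$ at each site, and the supremum over all of $\Z$ of such (essentially independent at large separations) variables is almost surely not controlled by $\sqrt{|\ln\ep|}$. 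Nor does your fallback of restricting to the $\sim\ep^{-3}$ ``relevant'' sites repair this: the $\ell^2$ norms in $\alpha_1$ and $\alpha_3$ sum over all $j\in\Z$, and the KdV profiles are not compactly supported, so there is no finite window to restrict to. The missing idea is to trade the logarithmic growth of $\gamma_i$ against the \emph{spatial decay} of the profiles, which is precisely why the good-solution definition uses the weighted spaces $H^7(1)$ rather than plain $H^7$: one writes $\gamma_i(j)^2 F(\ep(j-t))^2 \le \left(\sup_j \frac{\ln(e+|j|)}{1+(\ep(j-t))^2}\right)\cdot \ep^{-1}\,(1+(\ep(j-t))^2)F(\ep(j-t))^2$ and proves the calculus estimate $\sup_{|t|\le T_0/\ep^3}\sup_{j}\ln(e+|j|)/(1+(\ep(j-t))^2) \le C|\ln(\ep)|$, valid because the wave packet sits near $j\approx t$ with $|t|\le T_0\ep^{-3}$. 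This is the content of the paper's workhorse Lemma \ref{work it}, and it is not the routine step your write-up treats it as.

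A secondary miscount: you claim $\|\ep^6 W_i\|_{\ell^2} = \O(\ep^{11/2})$, but $W_1$ and $W_2$ in \eqref{Ws} themselves contain $\gamma$-bearing pieces ($E_0^+P_3$, $\partial_\tau Q_{3\gamma}$, $m\,\partial_\tau P_3$, $D^-(Q_{3\gamma}Q_1)$, etc.), which by the weighted estimate above are only $\O_{\ell^2}(\ep^{-1}\sqrt{|\ln\ep|})$; the paper splits $W_i = W_{i0} + W_{i\gamma}$ for exactly this reason. These pieces contribute at the full $\ep^5\sqrt{|\ln\ep|}$ level --- the same size as your ``binding'' $\sgn(j)\gamma_i$ term --- so your final bound on $\alpha_3$ survives, but not by the route you describe. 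Finally, for $\beta_1^{-1}$, the good-solution definition only gives positivity of the supremum over $T$ of the norms; to get $\inf_{|T|\le T_0}\|A\|+\|B\| \ge b > 0$, as required for a lower bound uniform in $t$, the paper invokes the KdV conservation laws, a step your reverse-triangle-inequality sketch should make explicit.
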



Estimates on terms which do not involve $\gamma_1$ or $\gamma_2$ can be handled using well-understood techniques found in previous works, whereas the rest require new ideas. 
All dependence on $\gamma_1$ and $\gamma_2$ enters through $P_3$ and $Q_3$, the latter of which has some terms without them. And so we put
\be\label{Qgamma}
Q_{3\gamma}:=\gamma_2 \partial_X^3 Q_0 \mand 
Q_{30} := Q_3 - \gamma_2 \partial_X^3Q_0.
\ee
To be clear, $Q_{30}$ has no instances of a $\gamma$ within.

Similarly if in the formulas for $W_1$ and $W_2$ we eliminate any term with a $\gamma$ in it we get:
\bes\begin{split}
W_{10}:=& \sum_{n=0}^2 E_{3-n}^+ P_n - \sum_{n = 1}^2 \ep^{n-1} \partial_TQ_n- \ep^{2} \partial_TQ_{30}
\\
W_{20}:=&  \sum_{n=0}^2 E_{3-n}^- Q_n + E_0^- Q_{30}+ E_1^- Q_0^2 +2 E_0^- (Q_0 Q_1) \\
&+D^-\left(2 Q_0 Q_2 + \left(Q_1+\ep Q_2+\ep^2Q_{30}\right)^2
\right)
 - m\sum_{n = 1}^2 \ep^{n-1} \partial_TP_n.
\end{split}\ees
Thus the terms with a $\gamma$ are:
\be\label{wt}
W_{1\gamma} := W_1-W_{10} \mand W_{2\gamma}:=W_2-W_{20}.
\ee

\subsection{Terms without $\gamma_1$ and $\gamma_2$}
In this part we prove:
\begin{lemma}\label{res prop1} Assume Hypothesis \ref{mass assumption}.
Let $\tilde{q}_\ep(j,t)$ and $\tilde{p}_\ep(j,t)$ be the {extended KdV approximators}
as in Definition \ref{full approx} where we further assume that that $A$ and $B$ are good solutions of KdV on $[-T_0,T_0]$.
Then 
$$
\sum_{n=0}^2\sup_{|t| \le T_0/ \ep^{3}} 
\left(\|P_n(\cdot,\ep \cdot, \ep t, \ep^3 t)\|_{\ell^2} +
\|Q_n(\cdot,\ep \cdot, \ep t, \ep^3 t)\|_{\ell^2} \right)= \O(\ep^{-1/2}),
$$
$$
\sup_{|t| \le T_0 /\ep^{3}} 
\|Q_{30}(\cdot,\ep \cdot, \ep t, \ep^3 t)\|_{\ell^2}= \O(\ep^{-1/2}),
$$
and
$$
\sup_{|t| \le T_0 /\ep^{3}} \left(
\|W_{10}(\cdot,\ep \cdot, \ep t, \ep^3 t)\|_{\ell^2}+
\|W_{20}(\cdot,\ep \cdot, \ep t, \ep^3 t)\|_{\ell^2} \right)= \O(\ep^{-1/2}).
$$
\end{lemma}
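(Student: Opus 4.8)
The plan is to exploit one structural fact: once all $\gamma$-dependence is stripped away, every term in $P_0,P_1,P_2$, $Q_0,Q_1,Q_2$, $Q_{30}$, $W_{10}$ and $W_{20}$ is a product of a \emph{uniformly bounded random coefficient} and a \emph{sampled long-wave profile}. Two tools then do all the work. First, the support bound $\zeta(j)\in(-1/4,1/4)$ from Hypothesis \ref{mass assumption} makes $\zeta$ and every finite-difference, shift, and finite product of copies of it that arises (such as $\delta_j^\pm\zeta$, $S_j^\pm\zeta$, $\delta_j^+\delta_j^-\zeta$ and $\zeta\,\delta_j^+\zeta$) bounded in $\ell^\infty$; each may therefore be pulled out of an $\ell^2$-norm at the cost of an $\O(1)$ factor. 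Second, a sampled profile satisfies $\sup_{|t|\le T_0/\ep^3}\|F(\ep(\cdot\mp t),\ep^3 t)\|_{\ell^2}=\O(\ep^{-1/2})\sup_{|T|\le T_0}\|F(\cdot,T)\|$ in the relevant weighted norm — this is the $\ep^{-1/2}$ of the long-wave scaling, and it is exactly Lemma \ref{lwa}. Because $A$ and $B$ are good solutions of KdV on $[-T_0,T_0]$, they and their $w$- and $l$-derivatives are bounded in $H^7(1)$ uniformly in $T$, and the KdV equations \eqref{KdVA}--\eqref{KdVB} convert each $\partial_T$ into $w/l$-derivatives and polynomial nonlinearities; so every profile I meet is controlled in the norm Lemma \ref{lwa} requires.

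First I would settle the two displayed estimates on the $P_n,Q_n$ and on $Q_{30}$ by reading Definition \ref{full approx} entry by entry. Every entry is a finite sum of (bounded coefficient)$\,\times\,$(profile): the long-wave pieces $A$, $B$, $A_2$, $B_2$ and their derivatives, together with microscale pieces carrying a bounded $\zeta$-coefficient. Several profiles in $A_2$ and $B_2$ are products of a right-moving factor (a function of $w$) and a left-moving factor (a function of $l$); for these I would put one factor in $\ell^\infty$ — costing $\O(1)$ since $\|g(\ep(\cdot\pm t))\|_{\ell^\infty}=\|g\|_{L^\infty}$ — and the other in $\ell^2$, costing $\O(\ep^{-1/2})$ by Lemma \ref{lwa}, so the product is again $\O(\ep^{-1/2})$. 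The antiderivatives $\A$ and $\B$ in \eqref{A2}--\eqref{B2} are only bounded rather than decaying, but each always multiplies a decaying derivative factor, so the products still lie in the weighted spaces; standard product (Moser-type) estimates in $H^n(1)$ then bound $A_2$, $B_2$ and the quadratic combinations $Q_0^2$, $Q_0 Q_1$ in the norms Lemma \ref{lwa} wants. The worst term of $Q_{30}$ is $\partial_X^3 Q_0$, needing only three $w/l$-derivatives of $A$ and $B$, comfortably inside $H^7(1)$. Summing the finitely many $\O(\ep^{-1/2})$ contributions yields the first two estimates.

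The residual estimate runs the same way but must also digest the truncation operators $E_M^\pm$ and the total differences $D^\pm$ appearing in \eqref{Ws}, and this is where the genuine care lies. From the Taylor-remainder form of $\ep^{M+1}E_M^\pm=D^\pm-\delta_j^\pm-\sum_{n=1}^M\ep^n\frac{(\pm1)^{n+1}}{n!}S_j^\pm\partial_X^n$ one reads off that $E_M^\pm$ applied to a sampled profile returns a sampled $\partial_X^{M+1}$-profile, evaluated at an intermediate shift $\ep(j+s)$ with $s\in[0,1]$ and carrying \emph{no} negative power of $\ep$; applying Lemma \ref{lwa} uniformly in $s$ keeps each such term at $\O(\ep^{-1/2})$. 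For $D^-$ acting on a product of a bounded coefficient $c(j)$ and a profile $F$ I would use the discrete product rule $D^-(cF)=(\delta^-c)\,F(\ep(\cdot\mp t))+(S^-c)\,(D^-F)$: the first piece is again (bounded coefficient)$\,\times\,$(profile), hence $\O(\ep^{-1/2})$, while in the second $D^-F$ is $\ep$ times a sampled derivative-profile and so $\O(\ep^{1/2})$, harmless. Shifts $S_j^\pm$ merely relabel $j$ and leave $\ell^2$-norms unchanged. The only thing to track is the derivative count: the worst cases ($E_3^+P_0$, and $E_0^-Q_{30}$ which already carries $\partial_X^3Q_0$) call for four or five $w/l$-derivatives of $A$ and $B$, which is exactly why the hypothesis asks for $H^7(1)$ — seven derivatives leave ample room and the single weight supplies the integrable tails that both Lemma \ref{lwa} and the bounded antiderivatives need. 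I expect the main obstacle to be not any single sharp inequality but this disciplined bookkeeping: cataloguing the many terms of $W_{10}$ and $W_{20}$ and checking, term by term, that no differencing or shift manufactures a negative power of $\ep$ or outruns the available regularity, so that every surviving factor is either a bounded random sequence or a profile covered by Lemma \ref{lwa}.
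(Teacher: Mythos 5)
Your proposal follows essentially the same route as the paper's proof: both reduce every $\gamma$-free term to a product of a bounded random sequence (bounded via the support condition in Hypothesis \ref{mass assumption}) and a sampled long-wave profile, invoke Lemma \ref{lwa} to pay the single factor $\ep^{-1/2}$ of the long-wave scaling, control the antiderivatives $\A$ and $\B$ through the weight (this is exactly Lemma \ref{anti-est}), handle mixed $w$/$l$ products by a sup-norm/$H^1$ splitting, and use the KdV equations to convert $\partial_T$ into spatial derivatives and nonlinearities; the paper simply executes this on one prototype term, $E_0^-\left(\delta_j^+\zeta\,\A\,\partial_l^2 B\right)$, rather than cataloguing everything, and your $E_M^\pm$ and discrete-product-rule manipulations are just re-derivations of estimates the paper takes directly from Lemma \ref{lwa}. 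One concrete correction to your regularity audit, though: the worst term is neither $E_3^+P_0$ nor $E_0^-Q_{30}$ (your ``four or five derivatives'') but $\ep^2\partial_T Q_{30}$ inside $W_{10}$ --- applying \eqref{KdVA}--\eqref{KdVB} to $\partial_T\partial_X^3 Q_0$ produces $\partial_w^6 A$ and $\partial_l^6 B$, and Lemma \ref{lwa} then requires these in $H^1$, so $H^7(1)$ is consumed exactly, with no ``ample room''; a count stopping at five derivatives would suggest the hypothesis could be weakened to $H^6(1)$, under which this step fails.
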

\begin{remark} Note that in Hypothesis \ref{mass assumption} we assumed that $|\zeta(j)|<1/4$ for all $j$.
A consequence of this is that none of the estimates in Lemma \ref{res prop1} depend on the realization of the $\zeta(j)$. That is to say there is no probability needed to understand this lemma.
\end{remark}

\begin{proof} The proof is similar to that of Proposition 4.2 of \cite{GMWZ}, though there are a few small, but substantive, differences. The main tool we need is:
\begin{lemma} \label{lwa}
Let $M \ge 0$ be an integer. 
Suppose that $f (j)\in \ell^\infty$ and $F(X) \in H^{M+1}$. If $u_\ep(j) := f(j) F(\ep j)$
then
$$
\|u\|_{\ell^\infty} \le \|f\|_{\ell^\infty} \|F\|_{L^\infty},
$$
$$
\|u\|_{\ell^2} \le C \ep^{-1/2} \|f\|_{\ell^\infty} \|F\|_{H^1},
$$
$$
\|E_M^\pm u\|_{\ell^2} \le C \ep^{-1/2} \|f\|_{\ell^\infty} \|F\|_{H^{M+1}}
$$
and
$$
\|D^\pm u\|_{\ell^2} \le C \ep^{-1/2}\|f\|_{\ell^\infty} \|F\|_{H^{1}}.
$$
The constants $C>0$ depend only on $M$.
\end{lemma}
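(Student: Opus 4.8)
The four bounds (write (i)--(iv) for them in the order stated) separate into a trivial one, a reduction, and two that carry the real content. Bound (i) is immediate: $|u_\ep(j)| = |f(j)|\,|F(\ep j)| \le \|f\|_{\ell^\infty}\|F\|_{L^\infty}$, and I take the supremum over $j$. Bound (iv) I would reduce to (ii). Since $u_\ep(j) = U(j,\ep j)$ with $U(j,X) := f(j)F(X)$, the sequence $D^\pm u_\ep$ is by construction just $\delta^\pm u_\ep$, and the crude estimate $|\delta^\pm u_\ep(j)| \le |u_\ep(j\pm 1)| + |u_\ep(j)|$ gives $\|D^\pm u_\ep\|_{\ell^2} \le 2\|u_\ep\|_{\ell^2}$; thus (iv) follows from (ii) up to a factor of $2$. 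No gain of a power of $\ep$ is possible here, nor is one claimed, precisely because $f$ is merely bounded rather than slowly varying.

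The heart of the matter is a single sampling principle: the point values $F(\ep j)$, $j \in \Z$, can be compared to integrals of $F$ over the length-$\ep$ intervals $I_j^+ := [\ep j, \ep(j+1)]$ (for the ``$+$'' operators) and $I_j^- := [\ep(j-1),\ep j]$ (for the ``$-$'' operators), each family tiling $\R$ exactly. For (ii) I would, for $X \in I_j^+$, use the fundamental theorem of calculus to write $F(\ep j)^2 = F(X)^2 - \int_{\ep j}^X 2 F(Y)F'(Y)\,dY$, average over $X \in I_j^+$, and bound the resulting double integral by $\int_{I_j^+}\bigl(F^2 + (F')^2\bigr)\,dY$. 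Summing over $j$ and using that the $I_j^+$ tile $\R$ yields
\[
\sum_{j\in\Z} F(\ep j)^2 \le \ep^{-1}\|F\|_{L^2}^2 + \|F\|_{H^1}^2 \le 2\ep^{-1}\|F\|_{H^1}^2 \qquad (\ep \le 1),
\]
and then (ii) follows from $\|u_\ep\|_{\ell^2}^2 \le \|f\|_{\ell^\infty}^2 \sum_j F(\ep j)^2$.

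For (iii) the plan is to recognize $E_M^\pm u_\ep$ as an integral Taylor remainder. By definition $\ep^{M+1} E_M^\pm U$ is exactly the tail $\sum_{n\ge M+1}\ep^n \tfrac{(\pm 1)^{n+1}}{n!} S_j^\pm \partial_X^n U$ of the Taylor expansion of $D^\pm U$ in its second argument; in integral-remainder form this reads, for the ``$+$'' case with $U(j,X)=f(j)F(X)$ evaluated at $X = \ep j$,
\[
\ep^{M+1} E_M^+ u_\ep(j) = \frac{1}{M!}\int_{I_j^+}\bigl(\ep(j+1) - Y\bigr)^M f(j+1)\,F^{(M+1)}(Y)\,dY,
\]
with the analogous expression over $I_j^-$ in the ``$-$'' case. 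Bounding $\bigl(\ep(j+1)-Y\bigr)^M \le \ep^M$ on $I_j^+$, pulling out $\|f\|_{\ell^\infty}$, and applying Cauchy--Schwarz on the length-$\ep$ interval gives $|\ep^{M+1} E_M^+ u_\ep(j)| \le \tfrac{1}{M!}\|f\|_{\ell^\infty}\,\ep^{M+1/2}\,\|F^{(M+1)}\|_{L^2(I_j^+)}$; squaring, summing over $j$, and invoking the tiling collapses the sum to $\|F^{(M+1)}\|_{L^2(\R)}^2$. Dividing through by $\ep^{M+1}$, the powers combine to a net $\ep^{-1/2}$ (independent of $M$), which gives (iii) with $C = 1/M!$.

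I do not expect a serious obstacle, as these are standard sampling estimates; the work is bookkeeping. The points to get right are: that $E_M^\pm$ applied to the sequence $u_\ep$ coincides with $E_M^\pm U$ evaluated at $X=\ep j$; the integral-remainder representation together with its correct microscale index ($f(j\pm 1)$, not $f(j)$, which is harmless since $|f(j\pm1)| \le \|f\|_{\ell^\infty}$); and the tracking of the competing powers of $\ep$ (the $\ep^{M+1}$ in the definition of $E_M^\pm$, the $\ep^M$ from $(X-Y)^M$, the $\ep^{1/2}$ from Cauchy--Schwarz, against the $\ep^{-1/2}$ from converting an $\ell^2$ sum into an $L^2$ integral) so that exactly $\ep^{-1/2}$ survives. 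The one genuinely reusable observation, worth isolating, is that for any $G \in L^2(\R)$ and any tiling of $\R$ by length-$\ep$ intervals $I_j$, Cauchy--Schwarz yields $\sum_j \bigl(\int_{I_j}|G|\bigr)^2 \le \ep\,\|G\|_{L^2}^2$; this single inequality drives both (ii) and (iii).
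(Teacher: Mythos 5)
Your proof is correct, and its skeleton matches the paper's: in both arguments the only structural step is to strip the bounded microscale factor off at the right shifted index --- the identity $E_M^+ u(j) = f(j+1)\,(E_M^+ F_\ep)(j)$ with $F_\ep(j):=F(\ep j)$, which you encode in your integral-remainder formula --- after which everything reduces to sampling estimates on the pure long-wave sequence $F_\ep$. The difference is what happens next: the paper stops there and cites Lemma~4.3 of the earlier periodic-coefficient work (GMWZ) for the bounds $\|F_\ep\|_{\ell^2}\le C\ep^{-1/2}\|F\|_{H^1}$ and $\|E_M^\pm F_\ep\|_{\ell^2}\le C\ep^{-1/2}\|F\|_{H^{M+1}}$, whereas you reprove them from scratch via the fundamental theorem of calculus for (ii) and the Taylor integral remainder plus Cauchy--Schwarz on the tiling intervals $I_j^\pm$ for (iii). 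Your route buys self-containedness, explicit constants (e.g.\ $1/M!$ in (iii)), and makes transparent why the net power is exactly $\ep^{-1/2}$; the paper's buys brevity. Your handling of (i) and of the reduction of (iv) to (ii) via $D^\pm u_\ep=\delta^\pm u_\ep$ on the diagonal and the triangle inequality is precisely the paper's. One cosmetic point: the pointwise Taylor-remainder manipulations presuppose $F$ smooth enough, which is justified either by noting that the paper's $\|F\|_{H^{M+1}}$ (zeroth plus $(M+1)$st derivative) controls all intermediate derivatives, or simply by density, since the paper defines $H^{M+1}$ as the closure of smooth functions in that norm; worth a sentence, but not a gap.
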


\begin{proof}  
Lemma 4.3 of \cite{GMWZ} is nearly identical to this, but has the requirement that $f(j)$ be $N$-periodic. Still we can piggyback the proof of our result on that one. 
The first estimate is all but obvious. 
For the second we have the easy estimate $\|u \|_{\ell^2} \le \|f\|_{\ell^\infty} \|F_\ep\|_{\ell^2}$ 
where $F_\ep(j) := F(\ep j)$, $j \in \Z$. But then the second estimate of Lemma 4.3 of \cite{GMWZ} applies and shows $\|F_\ep\|_{\ell^2}\le C \ep^{-1/2}\|F\|_{H^1}$.  For the third, a direct computation shows that
$\ds
E_M^+ u (j) = f(j+1) (E_M^+ F_\ep)(j)
$
which implies $\|E_M^+ u\|_{\ell^2} \le \|f\|_{\ell^\infty}\|E_M^+ F_\ep\|_{\ell^2}$. 
The third estimate from Lemma 4.3 of \cite{GMWZ} implies that $\|E_M^+ F_\ep\|_{\ell^2} \le C \ep^{-1/2} \|F\|_{H^{M+1}}$. 
To get an estimate for $E_M^-$ is similar. The final estimate, for $D^\pm u$, follows from, the definition of $D^\pm$, the triangle inequality and the second estimate in this lemma.

\end{proof}

We also need the following, to control the antiderivatives in $A_2$ and $B_2$:
\begin{lemma} \label{anti-est}
Suppose that $F(X) \in L^2(1)$ then $\mathcal{F}(X) := \int_0^X F(y) dy$ is in $L^\infty$ and  $\| \mathcal{F} \|_{L^\infty} \le \sqrt{\pi}\|F\|_{L^2(1)}$. 
\end{lemma}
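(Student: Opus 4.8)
The plan is to bound $\mathcal{F}(X)$ uniformly in $X$ by the total integral of $|F|$ over $\R$, and then to control that $L^1$-type quantity by the weighted norm $\|F\|_{L^2(1)}$ via a single application of Cauchy--Schwarz. First I would note that since $\mathcal{F}(X)=\int_0^X F(y)\,dy$, for every $X \in \R$ we have the crude but uniform bound
$$
|\mathcal{F}(X)| = \left| \int_0^X F(y)\,dy \right| \le \int_\R |F(y)|\,dy.
$$
Thus it suffices to prove the embedding estimate $\int_\R |F(y)|\,dy \le \sqrt{\pi}\,\|F\|_{L^2(1)}$, since the right-hand side is independent of $X$ and taking the supremum over $X$ then delivers both that $\mathcal{F} \in L^\infty$ and the stated norm inequality.

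The key step is the weight-splitting trick inside Cauchy--Schwarz. I would write $|F(y)| = \left(|F(y)|\sqrt{1+y^2}\right)\cdot(1+y^2)^{-1/2}$ and apply Cauchy--Schwarz to the product:
$$
\int_\R |F(y)|\,dy \le \left( \int_\R (1+y^2) F^2(y)\,dy \right)^{1/2} \left( \int_\R \frac{dy}{1+y^2} \right)^{1/2}.
$$
The first factor on the right is exactly $\|F\|_{L^2(1)}$ by the definition of the $H^0(1)=L^2(1)$ norm, and the second factor evaluates to $\sqrt{\pi}$ because $\int_\R (1+y^2)^{-1}\,dy = \pi$. Substituting back into the bound for $|\mathcal{F}(X)|$ completes the argument.

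There is no real obstacle here; the proof is essentially one line of Cauchy--Schwarz. The only point worth flagging is that the finiteness of $\int_\R (1+y^2)^{-1}\,dy$ is precisely what forces the weight exponent $r=1$ (rather than $r=0$) in the hypothesis $F \in L^2(1)$: this is what makes $L^2(1)$ embed into $L^1$, guarantees that $\mathcal{F}$ is even well-defined as a finite integral, and produces the explicit constant $\sqrt{\pi}$.
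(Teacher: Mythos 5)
Your proof is correct and is essentially the same as the paper's: both rest on the weight-splitting $|F(y)| = \left((1+y^2)^{1/2}|F(y)|\right)(1+y^2)^{-1/2}$ followed by Cauchy--Schwarz and the evaluation $\int_\R (1+y^2)^{-1}\,dy = \pi$. The only cosmetic difference is that you first bound $|\mathcal{F}(X)|$ by $\int_\R |F|$ and then apply Cauchy--Schwarz, whereas the paper applies Cauchy--Schwarz directly to $\int_0^X$ and enlarges the domain inside the two factors.
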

\begin{proof} We use Cauchy-Schwarz and the fact that $\int_\R (1+y^2)^{-1} = \pi $. To wit:
\begin{equation*}
\begin{split}
\left \vert \mathcal{F}(X) \right \vert 
\le 
& \int_0^X (1+y^2)^{-1/2} (1+y^2)^{1/2} | F(y)| dy\\
\le & \sqrt{\int_\R (1+y^2)^{-1}dy} \sqrt{\int_\R (1+y^2) | F(y)|^2 dy}
= \sqrt{\pi} \|F\|_{L^2(1)}.
\end{split}
\end{equation*}
Taking the supremum over $X$ seals the deal.
\end{proof}

Armed with Lemmas \ref{lwa} and \ref{anti-est} we can get into proving the estimates in the Lemma \ref{res prop1}. There are many terms and handling each would inflate this paper like a bounce house. 
So we do not do that. Instead we show how to estimate a ``prototype'' term which captures  the nuances. That term is
$$
g=E_0^-\left( \delta_j^+ \zeta \A \partial_l^2 B \right)
$$
which some digging will show appears in $\Res_2$. Using the estimate for $E_0^-$ from Lemma \ref{lwa} we have
$$
\|g\|_{\ell^2} \le C\ep^{-1/2} \| \delta^+_j \zeta\|_{\ell^\infty} \| \A \partial_l^2 B\|_{H^1}.
$$
By the triangle inequality and the definition of $\delta_j^+$ we have $\|\delta^+_j \zeta\|_{\ell^\infty} \le 2 \| \zeta\|_{\ell^\infty}$ and the supposition that the support of $\zeta(j)$ is in $(-1/4,1/4)$ ultimately gives $\|\delta^+_j \zeta\|_{\ell^\infty} \le 1/2$.
Also classical Sobolev-H\"older inequalities tell us that 
$\| \A \partial_l^2 B\|_{H^1} \le \| \A \|_{W^{1,\infty}} \|\partial_l^2B \|_{H^1} \le \| \A \|_{W^{1,\infty}} \|B \|_{H^3}$. 

Since $\| \A \|_{W^{1,\infty}} = \| \A \|_{L^\infty} + \| \A_w\|_{L^\infty}$ and $\A$ is an antiderivative of $A$ we can use Lemma \ref{anti-est} to conclude that  $\| \A \|_{L^\infty} \le \sqrt{\pi} \|A\|_{L^2(1)}.$ Likewise, Sobolev's inequality tells us that $\| \A_w\|_{L^\infty} = \|A\|_{L^\infty} \le C \|A\|_{H^1}$. So all together we have
$$
\|g\|_{\ell^2} \le C \ep^{-1/2} \left(  \|A\|_{L^2(1)}+\|A\|_{H^1}\right) \|B\|_{H^3}.
$$
Since $A$ and $B$ are assumed to be good solutions of KdV on $[-T_0,T_0]$ we get
$
\sup_{|t|\le T_0/\ep^3} \|g\|_{\ell^2} \le C \ep^{-1/2},
$
which is the targeted estimate.

All the other terms are handled using the same sorts of steps used above. We close the proof with a comment on the regularity needed. 
The most smoothness  required for $A$ and $B$ comes from the terms in $\partial_T Q_{30}$. As in \cite{schneider-wayne,bruckner-etal,GMWZ}, one finds that $\partial_w^6 A$ and $\partial_l^6B$ make an appearance and so, to deploy estimates like in Lemma \ref{lwa}, we need $A$ and $B$ to be in $H^7$.
\end{proof}

\subsection{The autoregressive part}
Now we need to put bounds on terms where $\gamma_1$ and $\gamma_2$ appear. 
The first question: how big are these sequences?
The equations in \eqref{AR1} which these satisfy  are examples of autoregressive models, specifically AR(1) processes \cite{grimmett-stirzaker}. We have the following almost sure estimate for solutions of such processes:
\begin{lemma} \label{AR estimate}
Suppose that $z(n)$, $n \ge 0$, are i.i.d.~random variables with zero mean and compact support. Fix $\theta \in (-1,1)$ and let
\be\label{ARgen}
\chi(n):=\sum_{k = 0}^{n-1} \theta^{k} z(n-k).
\ee
Then there exists a constant $C>0$ so that 
$$
\sup_{ n > 0}
{ |\chi(n)| \over \sqrt{ \ln(e+n) }} \le C \sqrt{ 1\over {1-\theta^2}}.
$$
The constant $C$ depends on the realization of $z(n)$ but does not depend on $\theta$; it is almost surely finite.
\end{lemma}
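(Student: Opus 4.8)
The plan is to recognize \eqref{ARgen} as a centered sum of independent, bounded random variables and to extract an almost sure, logarithmically corrected bound from an exponential tail estimate combined with Borel--Cantelli. Since $z(n)$ has compact support there is a deterministic $M$ with $|z(n)| \le M$ for all $n$, and since the $z(n)$ are independent with zero mean, $\chi(n) = \sum_{k=0}^{n-1}\theta^k z(n-k)$ is a sum of independent, mean-zero terms, the $k$-th of which lies in $[-|\theta|^k M,\,|\theta|^k M]$. Hoeffding's inequality then yields, for every $t>0$,
\[
\P(|\chi(n)| \ge t) \le 2\exp\left(-\frac{t^2}{2M^2 \sum_{k=0}^{n-1}\theta^{2k}}\right) \le 2\exp\left(-\frac{t^2(1-\theta^2)}{2M^2}\right),
\]
where the last inequality uses $\sum_{k=0}^{n-1}\theta^{2k} = (1-\theta^{2n})/(1-\theta^2) \le 1/(1-\theta^2)$.

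The crucial observation is that this tail becomes \emph{completely free of $\theta$} after the natural normalization. Choosing $t = \lambda \sqrt{\ln(e+n)}/\sqrt{1-\theta^2}$ turns the right-hand side into $2(e+n)^{-\lambda^2/(2M^2)}$, which no longer involves $\theta$. Hence, for a fixed $\theta$, any $\lambda$ with $\lambda^2 > 2M^2$ makes $\sum_{n\ge 1}\P(|\chi(n)|\sqrt{1-\theta^2} \ge \lambda\sqrt{\ln(e+n)})$ convergent, and Borel--Cantelli produces an almost surely finite random index $N(\om)$ beyond which $|\chi(n)|\sqrt{1-\theta^2} \le \lambda\sqrt{\ln(e+n)}$. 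The finitely many indices $n \le N$ are harmless, since each $|\chi(n)| \le M/(1-|\theta|)$ is finite; absorbing them into the constant gives the stated bound with a $C$ that depends on the realization through $N(\om)$ but, because neither $\lambda$ nor the normalized tail did, not on $\theta$.

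The subtle point---and the step I expect to require the most care---is the claim that a \emph{single} realization-dependent $C$ serves all $\theta$ simultaneously, rather than merely each fixed $\theta$ on its own event. To upgrade the pointwise-in-$\theta$ statement to a uniform one I would control $\sup_\theta$ by discretization. The map $\theta \mapsto |\chi(n)|\sqrt{1-\theta^2}$ is continuous on $(-1,1)$ and is bounded by $\sqrt{2}\,M$ whenever $1-|\theta| \le 1/n^2$ (there $|\chi(n)| \le Mn$ while $\sqrt{1-\theta^2} \le \sqrt{2}/n$), so the endpoints are controlled for free; on the remaining region it is Lipschitz with constant $O(n^2)$. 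Evaluating on a grid of $O(n^2)$ points, applying the $\theta$-free tail bound at each grid point through a union bound, and filling the gaps with the Lipschitz estimate gives
\[
\P\left(\sup_{\theta \in (-1,1)} |\chi(n)|\sqrt{1-\theta^2} \ge \lambda\sqrt{\ln(e+n)}\right) \le C\, n^2 (e+n)^{-\lambda^2/(2M^2)}.
\]
Taking $\lambda$ larger still keeps this summable, and one final Borel--Cantelli delivers an almost surely finite $C$ valid for every $\theta$ at once. This uniformity is precisely what is needed downstream, where $\theta$ is tied to the small parameter $\ep$ and the constant must not be allowed to depend on it.
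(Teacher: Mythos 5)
Your proposal is correct, and its engine is the same as the paper's: Hoeffding's inequality applied to the independent summands $\theta^k z(n-k)$ with envelopes $b_k = M|\theta|^k$, so that $\sum_k b_k^2 \le M^2/(1-\theta^2)$, followed by the threshold choice $t \sim \sqrt{\ln(e+n)/(1-\theta^2)}$ (which makes the tail bound $\theta$-free) and a Borel--Cantelli argument. Where you genuinely go beyond the paper is on the uniformity in $\theta$. The paper fixes $\theta$, lets $N_\om$ be the last exceptional index and $c_\om$ the worst ratio $|\chi(n)|/\mu(n)$ up to $N_\om$, and sets $C = 2ac_\om$; but both $N_\om$ and $c_\om$ implicitly depend on $\theta$ (the Borel--Cantelli null set is constructed per fixed $\theta$), so strictly speaking that proof delivers the bound for each fixed $\theta$ with a possibly $\theta$-dependent constant, and the asserted $\theta$-independence --- which is actually used downstream in Lemma \ref{gamma est}, where $\theta_\ep = 1/(1+\ep)$ and $\vartheta_\ep = 1-\ep$ range over a continuum as $\ep$ varies --- is stated rather than argued. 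Your chaining step (the trivial endpoint bound on $1-|\theta| \le 1/n^2$, Lipschitz control of $\theta \mapsto |\chi(n)|\sqrt{1-\theta^2}$ with constant $O(n^2)$ on the complementary region, a union bound over $O(n^2)$ grid points, and one final Borel--Cantelli) closes exactly this lacuna and produces a single almost surely finite constant serving all $\theta$ simultaneously, which is what the application with $\ep$-dependent $\theta$ really requires.

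One small repair is needed in your treatment of the finitely many indices $n \le N(\om)$: the bound $|\chi(n)| \le M/(1-|\theta|)$ is \emph{not} dominated by $C\sqrt{1/(1-\theta^2)}$ uniformly, since $1/(1-|\theta|)$ outgrows $1/\sqrt{1-\theta^2}$ as $|\theta| \to 1$. Use instead the trivial bound $|\chi(n)| \le Mn \le MN(\om)$ together with $\sqrt{1/(1-\theta^2)} \ge 1$ (or interpolate via $|\chi(n)| \le M\min\{n,\,2/(1-\theta^2)\} \le M\sqrt{2N(\om)}/\sqrt{1-\theta^2}$); either way these indices are absorbed into a realization-dependent but $\theta$-free constant, and the rest of your argument stands as written.
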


\begin{proof} 
The result is a consequence of of Hoeffding's inequality, whose proof can be found in \cite{hoeffding}:
\begin{theorem}\label{hoeffding}
Let $w(0),\dots,w(n-1)$ be mean-zero, independent random variables with $-b_k \le w(k) \le b_k$ almost surely and 
$
\chi(n) = \ds\sum_{k = 0}^{n-1} w(k).
$
Then for any $\mu \ge0$
$$
\P(|\chi(n)| \ge \mu) \le 2 \exp\left( -{ \mu^2 \over 2 \sum_{k=0}^{n-1} b_k^2}\right).
$$
\end{theorem}

We apply this to \eqref{ARgen}; let $w_n(k) := \theta^{k} z(n-k)$.
Since $\Ex[z(j)] = 0$ we have $\Ex[  w_n(k)]=0$ for all choices of $n$ and $k$. Since 
the $z(j)$ are independent it follows that, for fixed $n$, the  
 $w_n(k)$ 
are independent with respect to $k$. 
The support of $z(j)$ is compact so there is $a \ge 0$ for which the support lies in $[-a,a]$. 
Then the support of  $\theta^{k} z(n-k)$ is in $[-a\theta^k,a\theta^k]$. Thus $w_n(0),\dots,w_n(n-1)$ pass the hypotheses of Theorem \ref{hoeffding} with $b_k = a \theta^k$ and we have:
$$
\P[|\chi(n)| \ge \mu] \le 2 \exp\left( - { \mu^2 \over 2 a^2\sum_{k=0}^{n-1}  \theta^{2k}}\right)
=2 \exp\left( - { \mu^2 (1-\theta^2) \over 2 a^2 (1-\theta^{2n})}\right).
$$
Now let 
$\ds
\mu(n):= \sqrt{ \ln(e + n){4 a^2 (1-\theta^{2n}) \over 1-\theta^2 }}
$
so that
$$
\P[|\chi(n)| \ge \mu(n)] \le 2 \exp\left( - { \mu^2 (1-\theta^2) \over 2 a^2 (1-\theta^{2n})}\right)
=2 \exp\left( -2 \ln(e+n)\right) = {2 \over (e+n)^2}.
$$
Since $\sum_{n \ge 0} {2 / (e+n)^2}$ is finite, the Borel-Cantelli Lemma \cite{durrett} tells us that, almost surely, 
$|\chi(n)| \ge \mu(n)$ happens for at most finitely many $n$. For a given realization of $z(n)$ let $N_\om$ be the largest value of $n$ at which $|\chi(n)| \ge \mu(n)$ and put
$
c_\om:=\max_{1\le n \le N_\om} |\chi(n)|/\mu(n).
$
Thus we have
$$
|\chi(n)| \le  2a c_\om\sqrt{ \ln(e + n){1-\theta^{2n} \over 1-\theta^2 }}
\le 2a c_\om\sqrt{ {\ln(e + n)\over 1-\theta^2 }}
$$
for all $n$. Putting $C = 2a c_\om$ completes the proof.

\end{proof}

With Lemma \ref{AR estimate} we can prove
\begin{lemma}\label{gamma est} Take Hypothesis \ref{mass assumption} as given. Suppose that $\gamma_1(j)$ and $\gamma_2(j)$ solve \eqref{AR1} and $\gamma_1(0) = \gamma_2(0) = 0$. 
Then there exists a constant $C>0$ such that for all $\ep \in (0,1)$ we have
$$
\sup_{j \in \Z} {|\gamma_1(j)| +|\gamma_2(j)| \over \sqrt{\ln(e+|j|)} } \le C \ep^{-1/2}.
$$
The constant $C$ depends on the realization of the $\zeta(j)$ but is almost surely finite.
\end{lemma}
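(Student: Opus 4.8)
The plan is to recognize that, on either side of the origin, each equation in \eqref{AR1} is an AR(1) recursion of exactly the type handled by Lemma \ref{AR estimate}, and then to reduce to that lemma after decomposing the driving noise. First I would rewrite \eqref{AR1} explicitly. Since $\delta^+\gamma_1(j)=\gamma_1(j+1)-\gamma_1(j)$, the first equation reads $\gamma_1(j+1)=(1-\ep\sgn(j))\gamma_1(j)-(\zeta(j)+\zeta(j+1))$; for $j>0$ this propagates forward with contraction factor $\theta:=1-\ep\in(0,1)$, while for $j<0$ the same relation run backwards has factor $1/(1+\ep)\in(0,1)$. The analogous rearrangement of the $\gamma_2$ equation gives forward factor $1/(1+\ep)$ for $j>0$ and backward factor $1-\ep$ for $j<0$. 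In every case the factor lies in $(0,1)$ and satisfies $1-(\text{factor})^2\ge c\,\ep$ for $\ep\in(0,1)$; this is the mechanism that replaces the $\ep^{-3/2}$ of a genuine random walk by $\ep^{-1/2}$. Because $\gamma_1(0)=\gamma_2(0)=0$ we may propagate outward from the origin, and by the symmetry between the two half-lattices it suffices to treat $j=n\ge0$.

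Unfolding the forward recursion (modulo a harmless single-step adjustment at $j=0$, where $\sgn(0)=0$) expresses $\gamma_i(n)$ as $\sum_{k=0}^{n-1}\theta^k\,d_i(n-k)$, a fixed $\theta$-weighted sum of the driving sequences $d_1(j)=-(\zeta(j)+\zeta(j+1))$ and $d_2(j)=\zeta(j)+\zeta(j-1)+\zeta\,\delta^+\delta^-\zeta(j)+2\sigma^2$. My strategy is to split each $d_i$ into a fixed (finite) number of pieces, each a $\theta^p$-weighted sum of a single i.i.d., mean-zero, compactly supported sequence, so that Lemma \ref{AR estimate} applies term by term. The linear contributions $\zeta(j)$ and $\zeta(j\pm1)$ each yield, after an index shift, a process of the exact form \eqref{ARgen} driven by i.i.d.\ $\zeta$; the purely quadratic contribution $-2\zeta(j)^2+2\sigma^2=-2(\zeta(j)^2-\sigma^2)$ is itself i.i.d.\ and mean zero, so it too is covered directly. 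Applying Lemma \ref{AR estimate} to each piece and using $1/(1-(\text{factor})^2)\le C/\ep$ bounds every piece by $C\ep^{-1/2}\sqrt{\ln(e+n)}$ with a realization-dependent but (crucially) $\theta$-independent, hence $\ep$-independent, constant; summing the finitely many pieces with the triangle inequality then yields the claimed estimate for $\gamma_1$ and for the easy part of $\gamma_2$.

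The main obstacle is the genuinely quadratic cross term $\zeta(j)\zeta(j+1)$ (and its mirror $\zeta(j)\zeta(j-1)$) hiding in $\zeta\,\delta^+\delta^-\zeta(j)$: the sequence $j\mapsto\zeta(j)\zeta(j+1)$ is mean zero but only $1$-dependent, not i.i.d., so Lemma \ref{AR estimate} cannot be applied to it directly. I would resolve this by splitting the corresponding weighted sum according to the parity of the index: along even (resp.\ odd) indices the factors $\zeta(j)\zeta(j+1)$ involve pairwise disjoint blocks $\{2k,2k+1\}$ of the underlying $\zeta$'s and are therefore genuinely i.i.d. Each parity class is then a $(\theta^2)$-weighted process of the form \eqref{ARgen} with parameter $\theta^2\in(0,1)$, to which Lemma \ref{AR estimate} applies; since $1/(1-\theta^4)\le 1/(1-\theta^2)\le C/\ep$, these pieces also obey the $C\ep^{-1/2}\sqrt{\ln(e+n)}$ bound. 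I should stress one point that is easy to miss: the deterministic constant $2\sigma^2$ must be grouped with $-2\zeta(j)^2$ so that the driving term is mean zero. Were a nonzero constant left over, the weighted sum $\sum_k\theta^k\cdot\text{const}=\text{const}\cdot(1-\theta^n)/(1-\theta)$ would be of order $\ep^{-1}$, far too large; the presence of $2\sigma^2$ in \eqref{AR1} is exactly what prevents this.

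Finally I would record that the negative-$j$ case is identical after reversing the roles of the forward and backward factors, that the boundary step at $j=0$ alters one $\theta$-power by a bounded amount and so is absorbed into the constant, and that taking the maximum over the (finitely many, fixed) pieces and over the two half-lattices produces a single almost surely finite constant $C$, independent of $\ep$, as claimed.
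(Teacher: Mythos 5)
Your proposal is correct and follows essentially the same route as the paper: unfold the recursions from $\gamma_i(0)=0$ as $\theta_\ep$-weighted sums with $\theta_\ep\in\{1/(1+\ep),\,1-\ep\}$ on each half-lattice, decompose the driving noise into linear and quadratic pieces, apply the Hoeffding-based Lemma \ref{AR estimate} with its $\theta$-independent constant together with $1/(1-\theta_\ep^2)<1/\ep$, and defeat the dependence in the quadratic terms by splitting into independent subsequences. The only (cosmetic) difference is that you separate the cross products $\zeta(j)\zeta(j\pm1)$ and split each by parity of the index, whereas the paper keeps the full $2$-dependent quadratic term $v(j)$ intact and splits the sum by residues mod $3$; both reductions yield i.i.d.\ summands and the same $C\ep^{-1/2}\sqrt{\ln(e+|j|)}$ bound.
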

\begin{proof} 
We prove the estimate for $\gamma_2$ as the one for $\gamma_1$ is similar but easier.
Taking $j > 0$ in the second equation in \eqref{AR1} gives
$$
\gamma_2(j) -\gamma_2(j-1)=-\epsilon \gamma_2(j) + 
\left( \zeta(j) + S^-\zeta(j) + \zeta(j) \delta^+ \delta^- \zeta(j) + 2 \sigma^2\right)
$$
or rather
$$
\gamma_2(j) = {1 \over 1+ \ep} \gamma_2(j-1)
+{1 \over 1+ \ep} \left( \zeta(j) + S^-\zeta(j) + \zeta(j) \delta^+ \delta^- \zeta(j) + 2 \sigma^2\right).
$$
If we take $\gamma_2(0) =0$ then the we can find $\gamma_2(j)$ (for $j>0$) from the above by iteration. In particular we have
\bes
\begin{split}
\gamma_2(j) = &
{1 \over 1+\ep}
\sum_{k =0}^{j-1}
\theta_\ep^k \zeta(j-k)
+{1 \over 1+\ep}
\sum_{k =0}^{j-1}
\theta_\ep^k S^{-1}\zeta(j-k)\\
+&{1 \over 1+\ep}
\sum_{k =0}^{j-1}
\theta_\ep^k \left(\zeta(j-k) \delta^+ \delta^-\zeta(j-k) + 2 \sigma^2\right)\\
=:&{1 \over 1+\ep} \left(\gamma_{21}(j) + \gamma_{22}(j) + \gamma_{23}(j)\right)
\end{split}\ees
where we have put
$
\theta_\ep:= {1 /(1+ \ep)}.
$
To be clear $\gamma_{21}(j)$, $\gamma_{22}(j)$ and $\gamma_{23}(j)$ correspond to the three sums in the order of their appearance.

The random variables $\zeta(j)$ meet the hypotheses of Lemma \ref{AR estimate} and so we can apply the results to $\gamma_{21}(j)$ and $\gamma_{22}(j)$ forthwith to get:
$$
\sup_{j \in \Z} {|\gamma_{21}(j)| \over  \sqrt{\ln(e+|j|)}} \le C \sqrt{1 \over 1- \theta_\ep^2} 
\mand
\sup_{j \in \Z} {|\gamma_{22}(j)| \over  \sqrt{\ln(e+|j|)}} \le C \sqrt{1 \over 1- \theta_\ep^2} 
$$
for some $C>0$ which is almost surely finite.
An easy calculation shows that
$
{1 / (1- \theta_\ep^2) } = { (1 +\ep^2 )/ (2\ep + \ep^2)} < {1 / \ep}
$
when $\ep \in (0,1)$. Thus we have 
$$
\sup_{j \ge 0} {|\gamma_{21}(j)|+|\gamma_{22}(j)|  \over  \sqrt{\ln(e+|j|)}} \le C \ep^{-1/2}.
$$

Dealing with $\gamma_{23}(j)$ is a bit more complicated because the summands are not independent. 
We have $\gamma_{23}(j) = \sum_{k=0}^{j-1} \theta_\ep^k v(j-k)$ where
$$
v(j) = \zeta(j) \zeta(j+1) + \zeta(j) \zeta(j-1) - 2\zeta(j)^2 + 2 \sigma^2.
$$
From this we see that $v(j)$ and $v(j+1)$ are dependent. As are $v(j)$ and $v(j+2)$, since $\zeta(j+1)$ 
appears in both. But $v(j+3)$ and $v(j)$ have no terms in common and it follows that they are independent. Thus
$
\left\{ v(3 l) \right\}_{l \in \Z}
$
is an i.i.d.~collection of random variables. As are 
$
\left\{ v(3 l+1) \right\}_{l \in \Z}
$
and
$
\left\{ v(3 l+2) \right\}_{l \in \Z}.
$
We break up $\gamma_{23}(k)$ accordingly:
$$
\gamma_{23}(j) = 
\sum_{\substack{0\le k \le j-1\\ k=0  \textrm{mod} 3}}\theta_\ep^kv(j-k)
+\sum_{\substack{0\le k \le j-1\\ k=1  \textrm{mod} 3}} \theta_\ep^kv(j-k)
+\sum_{\substack{0\le k \le j-1\\ k=2 \textrm{mod} 3}}\theta_\ep^k v(j-k).
$$

Each of the three sums passes the hypotheses of Lemma \ref{AR estimate}, though there are some small subtleties. We estimate the first  as the others are all but the same.   Put $k = 3 l$ to find
\bes
\begin{split}
\sum_{\substack{0\le k \le j-1\\ k=0  \textrm{mod} 3}}\theta_\ep^kv(j-k)
& = 
\sum_{l= 0}^{\lfloor (j-1)/3 \rfloor} \theta_\ep^{3l} v(j-3l).
\end{split}
\ees
Then we have from Lemma \ref{AR estimate}:
$$
\sum_{l= 0}^{\lfloor (j-1)/3 \rfloor} \theta_\ep^{3l} v(j-3l) \le C \sqrt{\ln(e+\lfloor(j-1)/3\rfloor) }{\sqrt{1 \over 1-\theta_\ep^6}}.
$$
As $\theta_\ep^2>0$ we find $1/(1-\theta_\ep^6) = 1/(1-\theta_\ep^2)(1+\theta_\ep^2 + \theta_\ep^4) \le 1/(1-\theta_\ep^2) < 1/\ep$. This, along with the fact that $\ln$ is an increasing function, gives 
$$
\sum_{l= 0}^{\lfloor (j-1)/3 \rfloor} \theta_\ep^{3l} v(j-3l) \le C \sqrt{\ln(e+|j| )} \ep^{-1/2}
$$
which in turn leads to the estimate we are after.

We need estimates for $\gamma_2(j)$ when $j < 0$ too. If we take $j < 0$ in the second equation of \eqref{AR1} we  get
$$
\gamma_2(j) -\gamma_2(j-1)=\epsilon \gamma_2(j) + 
\left( \zeta(j) + S^-\zeta(j) + \zeta(j) \delta^+ \delta^- \zeta(j) + 2 \sigma^2\right).
$$
We rearrange this:
$$
\gamma_2(j-1) = (1-\ep) \gamma_2(j) -\left( \zeta(j) + S^-\zeta(j) + \zeta(j) \delta^+ \delta^- \zeta(j) + 2 \sigma^2\right).
$$
As we have taken $\gamma_2(0) = 0$ the above formula gives us $\gamma_2(-1)$ and, more generally, $\gamma_2(j)$, $j<0$, by iteration. For $j = -l < 0$ we obtain:
\bes\begin{split}
\gamma_2(-l) = -&\sum_{k= 0}^{l-1} \vartheta_\ep^k \zeta(-l+k+1)  -\sum_{k= 0}^{l-1} \vartheta_\ep^k S^{-1}\zeta(-l+k+1)\\
-&\sum_{k= 0}^{l-1} \vartheta_\ep^k\left(\zeta(-l+k+1) \delta^+ \delta^- \zeta(-l+k+1) + 2 \sigma^2\right)
\end{split}\ees
where $\vartheta_\ep := 1-\ep$. The first two sums pass the hypotheses of Lemma \ref{AR estimate} 
and since 
$
{1 /(1 - \vartheta_\ep^2)} = {1 / (2 \ep - \ep^2)} < {1/\ep} 
$
when $\ep \in (0,1)$ 
we can bound both as we did for $\gamma_{21}$ and $\gamma_{22}$ earlier. And the same skullduggery about independence that worked for $\gamma_{23}$ works for the third sum. All together we get
$$
\sup_{j \le 0} { |\gamma_{2}(j)| \over \sqrt{\ln(e+|j|)} }\le C \ep^{-1/2}.
$$
That completes the proof of Lemma \ref{gamma est}.
\end{proof}

Next we prove the main workhorse lemma for controlling $\gamma$ terms in our approximation:
\begin{lemma} \label{work it}
If $F = F(w,T)$ has $\sup_{|T|\le T_0}\| F(\cdot,T)\|_{H^1(1)}<\infty$ then
\be\label{gF est 1}
\sup_{|t| \le T_0/\ep^3} \| \gamma_k(\cdot) F(\ep(\cdot \pm t),\ep^3 t) \|_{\ell^2} \le C \ep^{-1} \sqrt{|\ln(\ep)|} \sup_{|T|\le T_0}\| F(\cdot,T)\|_{H^1(1)}
\ee
for $k =1,2$.

If in addition $\sup_{|T|\le T_0}\| F(\cdot,T)\|_{H^2(1)}<\infty$ then
\be\label{gF est 2}
\sup_{|t| \le T_0/\ep^3} \| E_0^\pm \left( \gamma_k(\cdot)  F(\ep(\cdot \pm t),\ep^3 t) \right)\|_{\ell^2} \le C \ep^{-1} \sqrt{|\ln(\ep)|} \sup_{|T|\le T_0}\| F(\cdot,T)\|_{H^2(1)}
\ee
for $k =1,2$. (The choices for $+$ or $-$ in $E_0^\pm$ and $F(\ep(\cdot \pm t),\ep^3 t)$ are not linked.)

The constant $C >0$ is almost surely finite.
\end{lemma}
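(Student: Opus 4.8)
The plan is to reduce both estimates to the unweighted sampling bounds already contained in Lemma~\ref{lwa} (applied with a trivial microscale factor), using Lemma~\ref{gamma est} to control the growth of $\gamma_k$ and then trading that growth for a polynomial weight on $F$. I treat one representative sign choice throughout; the remaining cases are identical because only magnitudes enter. The single new ingredient is the following elementary but crucial observation. By Lemma~\ref{gamma est} we have $\gamma_k(j)^2\le C\ep^{-1}\ln(e+|j|)$, and on the time window $|t|\le T_0/\ep^3$ this logarithm is tamed by the natural weight of $F$: writing $w_j:=\ep(j\pm t)$ and using $|j|\le|w_j|/\ep+|t|\le|w_j|/\ep+T_0\ep^{-3}$, a short case split on $|w_j|\le1$ versus $|w_j|>1$ yields, for all small $\ep$ and all $j$ and $t$ in range,
\be\label{logweight}
\ln(e+|j|)\le C\,|\ln(\ep)|\,(1+w_j^2).
\ee

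For the first estimate \eqref{gF est 1}, combining $\gamma_k(j)^2\le C\ep^{-1}\ln(e+|j|)$ with \eqref{logweight} gives
\[
\|\gamma_k(\cdot)F(\ep(\cdot\pm t),\ep^3t)\|_{\ell^2}^2\le C\ep^{-1}|\ln(\ep)|\sum_{j\in\Z}(1+w_j^2)\,F(w_j,\ep^3t)^2.
\]
Setting $G(w):=(1+w^2)^{1/2}F(w,\ep^3t)$, one checks (since $w(1+w^2)^{-1/2}$ is bounded) that $\|G\|_{H^1}\le C\|F(\cdot,\ep^3t)\|_{H^1(1)}$. The remaining sum is $\sum_j G(\ep j\pm\ep t)^2$, i.e.\ the squared $\ell^2$ norm of the sampling of the translate $G(\cdot\pm\ep t)$; the second estimate of Lemma~\ref{lwa} with $f\equiv1$, applied to this translate (whose $H^1$ norm equals $\|G\|_{H^1}$ by translation invariance), bounds it by $C\ep^{-1}\|G\|_{H^1}^2$. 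Taking square roots and then the supremum over $|t|\le T_0/\ep^3$, during which $\ep^3t$ ranges through $[-T_0,T_0]$, produces \eqref{gF est 1}.

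For the second estimate \eqref{gF est 2}, I first push $E_0^\pm$ through the product exactly as in the proof of Lemma~\ref{lwa}: with the macroscale factor written as $\hat F(X):=F(X\pm\ep t,\ep^3t)$, the identity $\ep E_0^\pm=D^\pm-\delta_j^\pm$ telescopes to
\[
E_0^\pm\big(\gamma_k(\cdot)F(\ep(\cdot\pm t),\ep^3t)\big)(j)=\gamma_k(j\pm1)\,(E_0^\pm\hat F_\ep)(j),
\]
where $\hat F_\ep(j):=\hat F(\ep j)$ and $(E_0^\pm\hat F_\ep)(j)=\pm\ep^{-1}\big(\hat F(\ep(j\pm1))-\hat F(\ep j)\big)$ is a difference quotient of $\hat F$ at scale $\ep$. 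By the fundamental theorem of calculus this quotient is an average of $\partial_wF(\cdot,\ep^3t)$ over an interval of length $\ep$ based at $w_j$, so Cauchy--Schwarz gives $(E_0^\pm\hat F_\ep)(j)^2\le\ep^{-1}\int_0^\ep|\partial_wF(w_j\pm r,\ep^3t)|^2\,dr$. Since $\gamma_k(j\pm1)^2\le C\ep^{-1}\ln(e+|j|)$ (the unit shift is harmless), \eqref{logweight} again converts the logarithm into the weight; bounding $(1+w_j^2)\le C(1+(w_j\pm r)^2)$ for $r\in[0,\ep]$ and using that the intervals $[w_j,w_j\pm\ep]$ tile $\R$ collapses the weighted sum into $\int_\R(1+w^2)|\partial_wF(w,\ep^3t)|^2\,dw=\|\partial_wF(\cdot,\ep^3t)\|_{L^2(1)}^2$. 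All told, $\|E_0^\pm(\cdots)\|_{\ell^2}^2\le C\ep^{-2}|\ln(\ep)|\,\|\partial_wF(\cdot,\ep^3t)\|_{L^2(1)}^2$, and since $\|\partial_wF\|_{L^2(1)}\le\|F\|_{H^1(1)}\le C\|F\|_{H^2(1)}$, square roots and the supremum over $t$ give \eqref{gF est 2}. (This route in fact needs only $H^1(1)$, so the $H^2(1)$ hypothesis is comfortably more than enough.)

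The main obstacle is the weight conversion \eqref{logweight}: everything else is bookkeeping on top of the sampling estimates of Lemma~\ref{lwa} and the $E_0^\pm$ product rule, but \eqref{logweight} is exactly where the long time scale $|t|\le T_0/\ep^3$ is used, and it is the quantitative reason autoregressive processes sidestep the random walk disaster. Because waves travel a distance $|j|\sim\ep^{-3}$, the factor $\ln(e+|j|)$ coming from Lemma~\ref{gamma est} is genuinely of size $|\ln(\ep)|$ near the bulk of $F$; the point of \eqref{logweight} is that this costs only $\sqrt{|\ln(\ep)|}$ after square-rooting, while in the tails---where $\ln(e+|j|)$ is larger---the decay stored in the $(1+w^2)$ weight of $H^\bullet(1)$ more than compensates.
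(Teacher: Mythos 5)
Your proposal is correct, and while the overall skeleton for \eqref{gF est 1} matches the paper's (pointwise bound $\gamma_k(j)^2\le C\ep^{-1}\ln(e+|j|)$ from Lemma \ref{gamma est}, insertion of the weight $1+(\ep(j\pm t))^2$, then the sampling estimate of Lemma \ref{lwa} applied to $(1+w^2)^{1/2}F$), you replace the paper's most laborious step by something genuinely simpler: the paper proves $\sup_{|t|\le T_0/\ep^3}\sup_{j\in\Z}\ln(e+|j|)/(1+(\ep(j-t))^2)\le C|\ln(\ep)|$ via a calculus argument---reducing by symmetry, locating an interior maximizer $y_\ep(t)$, and establishing the claim $t\le y_\ep(t)\le t+\ep^{-1}$ from the critical-point equation---whereas you prove the equivalent pointwise inequality $\ln(e+|j|)\le C|\ln(\ep)|\,(1+w_j^2)$ directly from $|j|\le|w_j|\ep^{-1}+T_0\ep^{-3}$ with a two-case split, avoiding the critical-point analysis entirely; I checked the case split (using $\ln(e+ab)\le\ln(e+a)+\ln(e+b)$ in the regime $|w_j|>1$) and it is sound. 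For \eqref{gF est 2} your route diverges more substantially: the paper uses the same factorization $E_0^\pm(\gamma_k F)=\gamma_k(j\pm1)(E_0^\pm F_\ep)$ and the representation $E_0^+F=\I_\ep\partial_wF$, but then imports the weighted-Sobolev boundedness $\|\I_\ep G\|_{H^n(r)}\le C\|G\|_{H^n(r)}$ from \cite{mcginnis-wright} and recycles \eqref{gF est 1} with $F$ replaced by $\I_\ep\partial_wF$; you instead apply Cauchy--Schwarz to the length-$\ep$ average and exploit that the intervals $[w_j,w_j\pm\ep]$ tile $\R$, collapsing the weighted sum into $\|\partial_wF(\cdot,\ep^3t)\|_{L^2(1)}^2$ outright. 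This buys a self-contained argument with no external citation and shows, as you observe, that $H^1(1)$ regularity already suffices for \eqref{gF est 2}---a sharper conclusion than the lemma states. One pedantic point: to recover the literal right-hand side of \eqref{gF est 2} you invoke $\|F\|_{H^1(1)}\le C\|F\|_{H^2(1)}$, which, under the paper's norm convention (only the $0$th and $n$th derivatives appear in $\|\cdot\|_{H^n(r)}$), is a weighted interpolation inequality rather than a triviality; it is true (integrate by parts and absorb), and the paper's own route needs it just as implicitly, so this is a shared gloss rather than a gap in your argument.
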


\begin{proof}
First we tackle \eqref{gF est 1}.
We handle $k = 1$ and the ``$-$'' sign. The other cases are no different.
First
\bes\begin{split}
\| \gamma_1(\cdot)F(\ep(\cdot-t),\ep^3 t)\|_{\ell^2}^2 = \sum_{j \in \Z}
\gamma_1^2(j) F(\ep(j-t),\ep^3 t)^2.
\end{split}\ees
Using the estimate from Lemma \ref{gamma est} gets
\bes\begin{split}
\| \gamma_1(\cdot) F(\ep(\cdot-t),\ep^3 t)\|_{\ell^2}^2
\le& C \ep^{-1} \sum_{j \in \Z}
{\ln(e+|j|)} F(\ep(j-t),\ep^3 t)^2.
\end{split}\ees
A simple estimate leads us to
\bes\begin{split}
\| \gamma_1(\cdot) F(\ep(\cdot-t),\ep^3 t)\|_{\ell^2}^2.\le& C \ep^{-1} \sup_{j \in \Z } {\ln(e+|j|) \over1+(\ep(j-t))^2} \sum_{j \in \Z}
(1+(\ep(j-t))^2)F(\ep(j-t),\ep^3 t)^2.
\end{split}\ees
We can apply the second estimate in Lemma \ref{lwa} to the sum (with $u = (1+(\ep(j-t))^2)F(\ep(j-t),\ep^3 t)^2$) and find
\bes\begin{split}
\| \gamma_1(\cdot) F(\ep(\cdot-t),\ep^3 t)\|_{\ell^2}^2 
\le& C \ep^{-2} \sup_{j \in \Z }  {\ln(e+|j|) \over1+(\ep(j-t))^2} 
\| \sqrt{1+(\cdot)^2}
F(\cdot,\ep^3 t)\|_{H^1}^2\\
\le&C \ep^{-2} \sup_{j \in \Z }  {\ln(e+|j|) \over1+(\ep(j-t))^2} 
\| 
F(\cdot,\ep^3 t)\|_{H^1(1)}^2\\
\end{split}\ees
Thus
 $$
 \sup_{|t| \le T_0/\ep^3} \| \gamma_1(\cdot) F(\ep(\cdot-t),\ep^3 t)\|_{\ell^2}
 \le C \ep^{-1} \sqrt{  \sup_{|t| \le T_0/\ep^3}  \sup_{j \in \Z} {\ln(e+|j|) \over1+(\ep(j-t))^2} }
 \sup_{|T|\le T_0} \| 
F(\cdot,T)\|_{H^1(1)}.
 $$ 
 
From this we see that the proof of \eqref{gF est 1} will be complete once we show that there is
$C=C(T_0)>0$ such that $0 < \ep < 1$ implies
$$
{  \sup_{|t| \le T_0/\ep^3}  \sup_{j \in \Z} {\ln(e+|j|) \over1+(\ep(j-t))^2} } \le C|\ln(\ep)|.
$$
The proof is mainly elementary Calculus, but that does not mean it  is obvious.
Here are the details.
Let $f_\ep(y,t) :=\ds{ \ln(e+|y|) \over1+(\ep(y-t))^2}$. We show
$ \sup_{|t| \le T_0/\ep^3}  \sup_{y \in \R} f_\ep(y,t) \le C|\ln(\ep)|$. Since $f_\ep(y,t) =f_\ep(-y,-t)$  we have $\sup_{|t| \le T_0/\ep^3}  \sup_{y \in \R} f_\ep(y,t) =  \sup_{0\le t \le T_0/\ep^3}  \sup_{y \in \R} f_\ep(y,t)$. If $t \ge 0$ and $y \ge 0$ then $|y-t| \le |-y-t|$ which implies $f_\ep(-y,t) \le f_\ep(y,t)$. Thus
$\sup_{|t| \le T_0/\ep^3}  \sup_{y \in \R} f_\ep(y,t) =  \sup_{0\le t \le T_0/\ep^3}  \sup_{y \ge 0} f_\ep(y,t)$. 

Next we argue that $f_\ep(y,t)$ achieves its supremum at a point in $(0,\infty)$. Clearly $f_\ep(y,t)$ is non-negative and
$f_\ep(y,t) \to 0$ as $y \to \infty$. It is easy enough to show that $\lim_{y \to 0^+} \partial_y f_\ep(y,t) > 0$ when $t \ge 0$. Since $f_\ep$ is smooth (except at $y=0$), these considerations imply the existence of $y_\ep(t)\in(0,\infty)$ for which $f_\ep(y_\ep(t),t) =  \sup_{y \in \R} f_\ep(y,t)$ and $\partial_y f_\ep(y_\ep(t),t) = 0$.

So we search for solutions of $\partial_y f_\ep(y,t) = 0$ with $y \ge 0$. We claim that for $t \ge 0$ and $\ep \in (0,1)$
that
\be\label{claim}
\partial_y f_\ep(y,t)=0 \text{ and } y\ge 0 \implies  t \le y \le t+{1 \over \ep}.
\ee
Given the claim, $t \le y_\ep(t) \le t+\ep^{-1}$ follows and as such:
$$
f_\ep(y_\ep(t),t) = {\ln (e+y_\ep(t)) \over 1 + (\ep(y_\ep(t)-t))^2} \le 
\ln(e + t + \ep^{-1}).
$$
In turn we have $\sup_{0\le t \le T_0/\ep^3}  \sup_{y \ge 0} f_\ep(y,t) \le \ln(e+T_0 \ep^{-3} + \ep^{-1}) \le C |\ln(\ep)|$
for a constant depending only on $T_0$. 

So we will be done if we establish the claim. Routine computations show that $\partial_y f_\ep(y,t) = 0$ if and only if 
\be\label{turd}
{1 + \ep^2 (y-t)^2 \over 2 \ep^2 (y-t)} = (e+y) \ln(e+y).
\ee
Note that if $ 0 \le  y < t$ then the left-hand side of \eqref{turd} is negative whereas the right-hand side is positive. So there can be no solutions with $y < t$ and this implies the left-hand inequality in \eqref{claim}.

For the right-hand inequality, let us assume that $y -t > \ep^{-1}$ and $t \ge 0$. This gives
$$
{1 + \ep^2 (y-t)^2 \over 2 \ep^2 (y-t)} = {1 \over 2 \ep^2 (y-t)} + {y -t \over 2} < {1 \over 2 \ep} + {y -t \over 2}.
$$
Next, since $t \ge 0$ then $y-t \le y$ which implies 
$\ds
{1 + \ep^2 (y-t)^2 \over 2 \ep^2 (y-t)}  < {1 \over 2 \ep} + {y  \over 2}.
$
Since $y - t > \ep^{-1}$ and $t \ge 0$ we have $y>\ep^{-1}  $. Thus
$\ds
{1 + \ep^2 (y-t)^2 \over 2 \ep^2 (y-t)}  < y.
$
Also we clearly have $y < (e+y) \ln(e+y)$ and so all told $$
{1 + \ep^2 (y-t)^2 \over 2 \ep^2 (y-t)}  <  (e+y) \ln(e+y)
$$
when $y - t > \ep^{-1}$ and $t \ge 0$. This precludes $\partial_y f_\ep(y,t) =0$ and
 the right inequality in \eqref{claim} follows. Thuse we are done with the proof of \eqref{gF est 1}.

The estimate \eqref{gF est 2} follows from \eqref{gF est 1} with a few tricks. First we have by direct calculation
$
 E_0^+ \left( \gamma_1  F\right)= \gamma_1(j+1) (E_0^+ F). 
$
Second, if we let $\I_\ep G(X)= \ep^{-1} \int_X^{X+\ep} G(Y) dY$ then the Fundamental Theorem of Calculus and the definition of $E_0^+$ tell us $E_0^+ F = \I_\ep \partial_w F$. One can show (see the argument that leads to equation (3.4) in \cite{mcginnis-wright}) that $\|\I_\ep G\|_{H^n(r)} \le C\|G\|_{H^n(r)}$. Putting it all together we get
\bes\begin{split}
\sup_{|t| \le T_0/\ep^3} \| E_0^\pm \left( \gamma_k(\cdot)  F(\ep(\cdot \pm t),\ep^3 t) \right)\|_{\ell^2} &=
\sup_{|t| \le T_0/\ep^3} \| \gamma_k(\cdot+1)  \I_\ep \partial_w F(\ep(\cdot \pm t),\ep^3 t) \|_{\ell^2}\\
&\le C \ep^{-1}\sqrt{|\ln(\ep)|}\sup_{|T|\le T_0}\|  \I_\ep \partial_w F(\cdot,T)\|_{H^1(1)}\\
&\le C \ep^{-1} \sqrt{|\ln(\ep)|}\sup_{|T|\le T_0}\|   F(\cdot,T)\|_{H^2(1)}.
\end{split}
\ees
 \end{proof}
 
 Now we can control all the $\gamma$ dependent terms.
 
\begin{lemma}\label{gamma prop} Assume Hypothesis \ref{mass assumption}.
Let $\tilde{q}_\ep(j,t)$ and $\tilde{p}_\ep(j,t)$ be the {extended KdV approximators}
as in Definition \ref{full approx} where we further assume that that $A$ and $B$ are good solutions of KdV on $[-T_0,T_0]$. Then almost surely $$
\sup_{|t| \le T_0 \ep^{-3}} \left(
\|P_3(\cdot,\ep \cdot, \ep t, \ep^3 t)\|_{\ell^2} +
\|Q_{3\gamma}(\cdot,\ep \cdot, \ep t, \ep^3 t)\|_{\ell^2}\right)= \O(\ep^{-1}\sqrt{|\ln(\ep)|})
$$
and
\bes\begin{split}\sup_{|t| \le T_0 \ep^{-3}} \left(
\|W_{1\gamma}(\cdot,\ep \cdot, \ep t, \ep^3 t)\|_{\ell^2}+
\|W_{2\gamma}(\cdot,\ep \cdot, \ep t, \ep^3 t)\|_{\ell^2}\right)=  \O(\ep^{-1}\sqrt{|\ln(\ep)|}).
\end{split}\ees

\end{lemma}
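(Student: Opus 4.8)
The plan is to reduce every $\gamma$-dependent term to one of the two canonical shapes controlled by Lemma \ref{work it}, namely $\gamma_k(\cdot) F(\ep(\cdot \pm t),\ep^3 t)$ and $E_0^\pm$ applied to such a product, and then read off the bound $\O(\ep^{-1}\sqrt{|\ln(\ep)|})$ directly from \eqref{gF est 1} and \eqref{gF est 2}. The only structural inputs needed are that the relevant profiles $F$ lie in $H^1(1)$ or $H^2(1)$ uniformly on $[-T_0,T_0]$, which follow from the good-solution hypothesis $A,B\in H^7(1)$, together with two elementary bookkeeping devices (factoring out $\ell^\infty$-bounded multipliers, and $\|D^- u\|_{\ell^2}\le 2\|u\|_{\ell^2}$).

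I would first dispatch the ``bare'' terms $P_3$ and $Q_{3\gamma}$. Since $P_0=-A+B$ and $Q_0=A+B$ with $w=X-\tau$, $l=X+\tau$, we have $\partial_X^3 P_0=-\partial_w^3 A+\partial_l^3 B$ and $\partial_X^3 Q_0=\partial_w^3 A+\partial_l^3 B$, so $P_3=\gamma_1\partial_X^3 P_0$ and $Q_{3\gamma}=\gamma_2\partial_X^3 Q_0$ (see \eqref{Qgamma} and Definition \ref{full approx}) are each a finite sum of terms $\gamma_k(j)G(\ep(j\mp t),\ep^3 t)$ with $G\in\{\partial_w^3 A,\partial_l^3 B\}$. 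Because $A,B\in H^7(1)\subset H^4(1)$, each such $G$ satisfies $\sup_{|T|\le T_0}\|G(\cdot,T)\|_{H^1(1)}<\infty$, and \eqref{gF est 1} yields the first displayed bound.

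For $W_{1\gamma}$ and $W_{2\gamma}$, which by \eqref{wt} are exactly the $\gamma$-carrying parts of $W_1,W_2$ in \eqref{Ws}, I would collect the contributions and split them into leading and higher-order groups. The leading ones are $E_0^+ P_3=E_0^+(\gamma_1\partial_X^3 P_0)$ and $E_0^- Q_{3\gamma}=E_0^-(\gamma_2\partial_X^3 Q_0)$ (the $n=3$ terms of the $E$-sums), handled by \eqref{gF est 2} at the cost of one extra derivative, so needing $A,B\in H^5(1)$; and the convective terms $\partial_\tau Q_{3\gamma}=\gamma_2\partial_\tau\partial_X^3 Q_0=\gamma_2(-\partial_w^4 A+\partial_l^4 B)$ and $m\,\partial_\tau P_3=m\gamma_1\partial_\tau\partial_X^3 P_0$, handled by \eqref{gF est 1}, where the factors $m=1+\delta^+\delta^-\zeta$ and any $\zeta$-factors are harmless $\ell^\infty$-bounded multipliers by Hypothesis \ref{mass assumption}. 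The remaining contributions—$\ep^2\partial_T Q_{3\gamma}$, $m\ep^2\partial_T P_3$, and the $\gamma$-part of $D^-\!\big((Q_1+\ep Q_2+\ep^2 Q_3)^2\big)$—carry explicit extra powers of $\ep$; using $\|D^- u\|_{\ell^2}\le 2\|u\|_{\ell^2}$ and the $\ell^\infty$-boundedness of $Q_1+\ep Q_2$, these reduce to \eqref{gF est 1} multiplied by at least $\ep^2$, hence are $o(\ep^{-1}\sqrt{|\ln(\ep)|})$. Mixed products in which both $A(\ep(\cdot-t))$ and $B(\ep(\cdot+t))$ appear are resolved by pulling out the bounded factor, e.g. $\|\gamma_2(\cdot)A(\ep(\cdot-t))B(\ep(\cdot+t))\|_{\ell^2}\le\|B\|_{L^\infty}\|\gamma_2(\cdot)A(\ep(\cdot-t))\|_{\ell^2}$ with $\|B\|_{L^\infty}\le C\|B\|_{H^1}$ finite and uniform in $T$, after which \eqref{gF est 1} closes the bound.

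The main obstacle is not any individual estimate—each collapses to Lemma \ref{work it}—but the organization together with the regularity budget. The most delicate step is converting the $\partial_T$-derivatives: $\partial_T$ commutes with $\partial_X^3$, and I would replace $\partial_T A$ and $\partial_T B$ through the KdV equations \eqref{KdVA} and \eqref{KdVB} by combinations of $\partial_w^3 A$, $\partial_w(A^2)$ (respectively $\partial_l^3 B$, $\partial_l(B^2)$), which pushes the spatial-derivative count up to six and is precisely why $H^7(1)$ is assumed. Once each profile is verified to remain in $H^1(1)$ or $H^2(1)$ on $[-T_0,T_0]$, summing the finitely many contributions gives the second displayed estimate. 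As in the proof of Lemma \ref{res prop1}, I would carry out one or two representative terms in detail and note that the rest are identical in spirit.
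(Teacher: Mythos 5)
Your proposal is correct and takes essentially the same route as the paper: the paper likewise computes $W_{1\gamma} = E_0^+P_3 - \partial_\tau Q_{3\gamma} - \ep^2\partial_T Q_{3\gamma}$ and the analogous expansion of $W_{2\gamma}$ (with $E_0^-Q_{3\gamma}$, $m\partial_\tau P_3$, $m\ep^2\partial_T P_3$ and the $D^-$ product terms carrying extra powers of $\ep$), and reduces every term to Lemma \ref{work it} via \eqref{gF est 1} and \eqref{gF est 2}, with your KdV substitution for the $\partial_T$ terms making explicit the regularity accounting the paper only gestures at. The lone cosmetic difference is that for the $D^-$ products the paper uses $\|fg\|_{\ell^2}\le\|f\|_{\ell^2}\|g\|_{\ell^2}$ together with the earlier $\ell^2$ bounds on the $Q_k$, whereas you factor out $\ell^\infty$-bounded multipliers; both close the estimate with ample margin.
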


\begin{proof}
The estimates for $P_3$ and $Q_{3\gamma}$ are immediate from Lemma \ref{work it}
and their definitions.
A direct calculation shows that 
$$
W_{1\gamma} = E_0^+P_3 - \partial_\tau Q_{3 \gamma} - \ep^2 \partial_T Q_{3 \gamma}.
$$
Each of these can be estimated with Lemma \ref{work it} as well.
Another calculation gives
\bes\begin{split}
W_{2\gamma} = 
&E^-_0 Q_{3 \gamma} - m \partial_\tau P_3 - m \ep^2\partial_T P_3\\
&+ 2\ep^2 D^- (Q_{3\gamma}Q_1)+2\ep^3D^- (Q_{3\gamma} Q_2) + 2 \ep^4 D^- (Q_{3 \gamma} Q_{30}) + \ep^4 D^- Q_{3\gamma}^2.
\end{split}\ees
The first line of the above we estimate with Lemma \ref{work it}. The ones in the second line all hinge on estimating terms of the form
$D^-(Q_{3 \gamma} Q_l)$ for different choices of $l$. The definition of $D^-$ and the triangle inequality give
$
\|D^- (Q_{3 \gamma} Q_l)(\cdot,\ep \cdot )\|_{\ell^2}  \le2 \|Q_{3\gamma}(\cdot,\ep \cdot )  Q_l(\cdot,\ep \cdot )\|_{\ell^2}
$
Then we use the fact that $\|f g\|_{\ell^2} \le \|f\|_{\ell^2} \|g\|_{\ell^2}$ to get
$
\|D^- (Q_{3 \gamma} Q_l)(\cdot,\ep \cdot )\|_{\ell^2} \le2 \|Q_{3\gamma}(\cdot,\ep \cdot )\|_{\ell^2}\|  Q_l(\cdot,\ep \cdot )\|_{\ell^2}.
$
At this point the remainder of the estimates follow from earlier estimates on the component $Q_k$ and bookkeeping.

\end{proof}

\subsection{Finishing up}
We are now in position to prove Proposition \ref{big prop}. 
\begin{proof}
We begin with $\alpha_1(\ep)$. From its definition, \eqref{start}, \eqref{Qgamma} and the triangle inequality we have
\bes \begin{split}
\alpha_1(\ep) \le &\sum_{n=0}^2 \ep^{n+2} \sup_{|t| \le T_0/\ep^3}  \left(\|Q_n(\cdot,\ep \cdot, \ep t, \ep^3 t)\|_{\ell^2}
+\|P_n(\cdot,\ep \cdot, \ep t, \ep^3 t)\|_{\ell^2}\right)\\
+& \ep^5 \sup_{|t| \le T_0/\ep^3} \|Q_{30}(\cdot,\ep \cdot, \ep t, \ep^3 t)\|_{\ell^2}
+\ep^5 \sup_{|t| \le T_0/\ep^3}\left(\|Q_{3\gamma}(\cdot,\ep \cdot, \ep t, \ep^3 t)\|_{\ell^2}+ \|P_3(\cdot,\ep \cdot, \ep t, \ep^3 t)\|_{\ell^2}\right)
\end{split} \ees
Then Lemmas \ref{res prop1} and \ref{gamma prop} gives us
 $
\alpha_1(\ep)  = \O(\ep^{3/2}). 
$

For $\alpha_3(\ep)$, we use its definition, Lemma \ref{residual calculation}, \eqref{wt} and the triangle inequality to obtain
\bes\begin{split}
\alpha_3(\ep) \le &\ep^6 \sup_{|t| \le T_0/\ep^3}
\left( \|\gamma_1(\cdot) \partial_X^3 P_0(\cdot,\ep \cdot,\ep t,\ep^3 t) \|_{\ell^2}
+\|\gamma_2(\cdot) \partial_X^3 Q_0(\cdot,\ep \cdot,\ep t,\ep^3 t) \|_{\ell^2}\right)\\
+ &\ep^6 \sup_{|t| \le T_0/\ep^3}
\left( \|W_{10}(\cdot,\ep \cdot,\ep t,\ep^3 t) \|_{\ell^2}
+\|W_{20}(\cdot,\ep \cdot,\ep t,\ep^3 t) \|_{\ell^2}\right)\\
+&\ep^6 \sup_{|t| \le T_0/\ep^3}
\left( \|W_{1\gamma}(\cdot,\ep \cdot,\ep t,\ep^3 t) \|_{\ell^2}
+\|W_{2\gamma}(\cdot,\ep \cdot,\ep t,\ep^3 t) \|_{\ell^2}
\right).
\end{split}\ees
Then  Lemmas \ref{res prop1} , \ref{work it} and \ref{gamma prop} give
 $
\alpha_3(\ep)  = \O(\ep^5\sqrt{|\ln(\ep)|}). 
$

To prove the estimate for  $\alpha_2(\ep)$, from \eqref{start} we have
$$
\partial_t \tilde{q}_\ep = \ep^3 \partial_\tau Q_0 + 
\bunderbrace{\ep^5 \partial_T Q_0 + \ep^4 \sum_{k=1}^3 \left(\ep^{k} \partial_\tau Q_k +\ep^{k+2} \partial_T Q_k\right)}{\ep^4 \tilde{h}_\ep}.
$$
Since $\|f\|_{\ell^\infty} \le \|f\|_{\ell^2}$ we have
$
\|\tilde{h}_\ep\|_{\ell^\infty} \le \| \tilde{h}_\ep\|_{\ell^2}.
$
All terms appearing in $\tilde{h}_\ep$ have been estimated in one place or another previously and each is $\O_{\ell^{2}}(\ep^{-1/2})$ at worst so that we get $\sup_{|t| \le T_0/\ep^3} \| \ep^4 \tilde{h}_\ep\|_{\ell^\infty} = \O(\ep^{7/2})$. On the other hand using the first estimate in Lemma \ref{lwa} shows that $$
\sup_{|t|\le T_0/\ep^3} \|\ep^3 \partial_\tau Q_0(\cdot,\ep \cdot, \ep t,\ep^3 t)\|_{\ell^\infty} \le 
\sup_{|t|\le T_0/\ep^3} \ep^3\left( \|A_w(\cdot,\ep^3t)\|_{W^{1,\infty}}
+ \|B_l(\cdot,\ep^3t)\|_{W^{1,\infty}}\right) \le C\ep^3.
$$
So all told we have $\alpha_2(\ep) = \O(\ep^3)$.

Next, if we let \be\label{checks}
\check{q}_\ep(j,t):=\sum_{k=1}^3 \ep^{k+2} Q_k(j,\ep j, \ep t,\ep^3 t)
\mand
\check{p}_\ep(,t):=\sum_{k=1}^3 \ep^{k+2} P_k(j,\ep j, \ep t,\ep^3 t)
\ee
then the estimates from Lemmas \ref{res prop1} and \ref{gamma prop} lead to
\be\label{extras}
\sup_{|t|\le T_0/\ep^3} \|\check{q}_\ep,\check{p}_\ep\|_{\ell^2} = \O(\ep^{5/2}).
\ee
So long $A$ and $B$ are not both identically zero it is easy using the conservation laws of KdV to find that
$\inf_{|T| \le T_0} \|A(\cdot,T)\|_{H^7(1)} + \|B(\cdot,T)\|_{H^7(1)} \ge b > 0$, for some $b$. This leads, by the triangle inequality, to
$
\beta_1(\ep)=\inf_{|t| \le T_0/\ep^3} \|\tilde{q}_\ep,\tilde{p}_\ep\|_{\ell^2} \ge C \ep^{3/2}.
$
This completes the proof.
\end{proof}

\section{The main event}\label{main event}

Now we can state and prove our main theorem in full detail.
\begin{theorem}\label{main theorem}
Let $m(j)$ be a realization of the mass coefficients subject to Hypothesis \ref{mass assumption}.
Fix $T_0>0$ and $\Phi,\Psi \in H^7(1)$. Let $(q(j,t),p(j,t))$ be the solution of 
the transparent random mass FPUT lattice \eqref{FPUT}
with initial data
$$
q(j,0) = \ep^2 \Phi(\ep j) \mand p(j,0) = \ep^2 \Psi(\ep j).
$$
Let $A(w,T)$ and $B(l,t)$ be the solutions of the KdV equations \eqref{KdVA} and \eqref{KdVB} with initial data
$$
A(w,0) = {1 \over 2}\Phi(w) -{1 \over 2}\Psi(w) \mand B(l,0) = {1 \over 2}\Phi(l) +{1 \over 2}\Psi(l).
$$
Then there exits $\ep_\star = \ep_\star(m,T_0,\Phi,\Psi)$ (almost surely positive) and $C_\star = C_\star(m,T_0,\Phi,\Psi)>0$ (almost surely finite) such that, for all $\ep \in (0,\ep_\star)$, we have the absolute $\ell^2$-error estimates
\bes\begin{split}\label{abs err est}
&\sup_{|t| \le T_0 /\ep^3}{ \left\|q(\cdot,t) - \ep^2\left[ 
A(\ep(\cdot-t),\ep^3 t) + B(\ep(\cdot+t),\ep^3 t)
\right]\right\|_{\ell^2}}\le C_\star \ep^2 \sqrt{|\ln(\ep)|} \mand \\
&\sup_{|t| \le T_0 /\ep^3}{\left\|p(\cdot,t) - \ep^2\left[ 
-A(\ep(\cdot-t),\ep^3 t) + B(\ep(\cdot+t),\ep^3 t)
\right]\right\|_{\ell^2} }\le C_\star \ep^2 \sqrt{ |\ln(\ep)|}.\end{split}
\ees
If at least one of $\Phi$ or $\Psi$ is non-zero then the associated relative $\ell^2$-error estimates are $\O(\sqrt{\ep|\ln(\ep)|})$.

\end{theorem}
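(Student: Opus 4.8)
The plan is to build the extended KdV approximators from the prescribed data and then feed the quantitative estimates of Proposition \ref{big prop} into the general machinery of Theorem \ref{approx thm}. First I would let $A$ and $B$ be the solutions of the KdV equations \eqref{KdVA} and \eqref{KdVB} with the initial data stated in the theorem. Since $\Phi,\Psi \in H^7(1)$, the data $A(\cdot,0)$ and $B(\cdot,0)$ lie in $H^7(1)$, and by the classical well-posedness theory for KdV in weighted Sobolev spaces (see \cite{tao}) one obtains solutions satisfying $\sup_{|T|\le T_0}\|A(\cdot,T)\|_{H^7(1)}+\|B(\cdot,T)\|_{H^7(1)}<\infty$. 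Provided $\Phi$ and $\Psi$ are not both zero, the conservation laws of KdV supply the positive lower bound as well, so $A$ and $B$ are \emph{good solutions of KdV on $[-T_0,T_0]$}. I would then take $\gamma_1,\gamma_2$ to solve the autoregressive processes \eqref{AR1} and assemble the extended KdV approximators $\tilde{q}_\ep,\tilde{p}_\ep$ exactly as in Definition \ref{full approx}.

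Next I would verify the hypotheses of Theorem \ref{approx thm}. Proposition \ref{big prop} gives $\alpha_1(\ep)=\O(\ep^{3/2})$, $\alpha_2(\ep)=\O(\ep^3)$, $\alpha_3(\ep)=\O(\ep^5\sqrt{|\ln(\ep)|})$ and $\beta_1^{-1}(\ep)=\O(\ep^{-3/2})$, which immediately yield \eqref{alf}: $\alpha_1=o(1)$, $\alpha_2=\O(\ep^3)$, and since $\beta_1(\ep)\ep^3\ge C\ep^{9/2}$ we get $\alpha_3/(\beta_1\ep^3)=\O(\ep^{1/2}\sqrt{|\ln(\ep)|})=o(1)$, i.e.~$\alpha_3=o(\beta_1\ep^3)$. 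The remaining point is the initial-condition hypothesis, where the choice of KdV data is tailor-made. At $t=0$ one has $w=l=\ep j$ and $T=0$, so $Q_0(j,\ep j,0,0)=A(\ep j,0)+B(\ep j,0)=\Phi(\ep j)$ and $P_0(j,\ep j,0,0)=-A(\ep j,0)+B(\ep j,0)=\Psi(\ep j)$. Hence the leading parts of $\tilde{q}_\ep(0),\tilde{p}_\ep(0)$ reproduce $q(0)=\ep^2\Phi(\ep\cdot)$ and $p(0)=\ep^2\Psi(\ep\cdot)$ exactly, and the mismatch is precisely the higher-order tail $\check{q}_\ep(0),\check{p}_\ep(0)$, which by \eqref{extras} is $\O_{\ell^2}(\ep^{5/2})$. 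As $\ep^{5/2}=o(\ep^2\sqrt{|\ln(\ep)|})=o(\alpha_3(\ep)/\ep^3)$, the data of \eqref{FPUT} and of the approximators agree within $\O(\alpha_3/\ep^3)$, as required.

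Applying Theorem \ref{approx thm} then gives $\sup_{|t|\le T_0/\ep^3}\|q(t)-\tilde{q}_\ep(t),p(t)-\tilde{p}_\ep(t)\|_{\ell^2}=\O(\alpha_3/\ep^3)=\O(\ep^2\sqrt{|\ln(\ep)|})$. To reach the stated leading-order profiles I would split $\tilde{q}_\ep=\ep^2 Q_0(\cdot,\ep\cdot,\ep t,\ep^3 t)+\check{q}_\ep$, observing that $\ep^2 Q_0=\ep^2[A(\ep(\cdot-t),\ep^3 t)+B(\ep(\cdot+t),\ep^3 t)]$ is exactly the asserted profile (and similarly for $p$). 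Since $\|\check{q}_\ep,\check{p}_\ep\|_{\ell^2}=\O(\ep^{5/2})$ by \eqref{extras}, the triangle inequality absorbs this tail into the $\O(\ep^2\sqrt{|\ln(\ep)|})$ error, yielding the absolute estimates. The relative estimate then follows from the relative bound in Theorem \ref{approx thm}, namely $\O(\alpha_3/(\beta_1\ep^3))=\O(\ep^{1/2}\sqrt{|\ln(\ep)|})=\O(\sqrt{\ep|\ln(\ep)|})$; equivalently one notes the approximation has size $\sim\ep^{3/2}$ and divides.

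I expect no serious obstacle, as essentially all of the analytic and probabilistic difficulty has already been packaged into Proposition \ref{big prop} and Theorem \ref{approx thm}. The only step requiring genuine care is the initial-condition matching: one must confirm both that the leading profiles $Q_0,P_0$ recover $\Phi,\Psi$ under the chosen KdV data and that the $\O(\ep^{5/2})$ correction from \eqref{extras} sits comfortably below the tolerance $\O(\ep^2\sqrt{|\ln(\ep)|})$ dictated by $\alpha_3$.
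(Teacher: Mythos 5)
Your proposal is correct and follows essentially the same route as the paper's own proof: construct the extended KdV approximators from the prescribed data, invoke Proposition \ref{big prop} to verify \eqref{alf}, check the initial-condition matching via $\check{q}_\ep(0),\check{p}_\ep(0)=\O_{\ell^2}(\ep^{5/2})$, and apply Theorem \ref{approx thm} with a final triangle inequality using \eqref{extras}. The only cosmetic difference is that you spell out the KdV well-posedness and the verification $\alpha_3 = o(\beta_1\ep^3)$ slightly more explicitly than the paper does.
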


\begin{proof}
Take $A$ and $B$ with the initial data as in the statement and form $\tilde{q}_\ep$ and $\tilde{p}_\ep$ 
as in Definition \ref{full approx}. Then we have the estimates for $\alpha_1(\ep)$, $\alpha_2(\ep)$, $\alpha_3(\ep)$ and $\beta_1(\ep)$ as in Proposition \ref{big prop}, which is to say we have met condition \eqref{alf} from the statement of Theorem \ref{approx thm}.

Note that 
$$
q(j,t)-\tilde{q}_\ep(j,t) =q(j,t)- \ep^2 [A(\ep(j-t),\ep^3 t) + B(\ep(j+t),\ep^3 t)] - \check{q}_\ep(j,t)
$$
where $\check{q}_\ep$ is given in \eqref{checks}. In \eqref{extras} we showed that $\check{q}_\ep = \O_{\ell^2}(\ep^{5/2})$ for $|t| \le T_0/\ep^3$. The initial conditions for $p$, $q$, $A$ and $B$ are arranged so that
$$
q(j,0)-\tilde{q}_\ep(j,0) =  -\check{q}_\ep(j,0)
$$
And so we have $\|q(0)-\tilde{q}_\ep(0)\|_{\ell^2} = \|\check{q}_\ep(0)\|_{\ell^2} =\O(\ep^{5/2})$. Similarly we have
$$
p(j,t)-\tilde{p}_\ep(j,t) =p(j,t)- \ep^2 [-A(\ep(j-t),\ep^3 t) + B(\ep(j+t),\ep^3 t)] - \check{p}_\ep(j,t)
$$
and $\|p(0)-\tilde{p}_\ep(0)\|_{\ell^2} = \|\check{p}_\ep(0)\|_{\ell^2} =\O(\ep^{5/2})$. 

Since $\alpha_3(\ep)/\ep^3 = \O(\ep^2 \sqrt{|\ln(\ep)|})$ and $\ep^{5/2} = o(\ep^2 \sqrt{|\ln(\ep)|})$, these estimates imply that we meet the condition on the initial data in the statement of Theorem \ref{approx thm}. Thus we have the conclusion
$$
\sup_{|t|\le T_0/\ep^3} \|q(t)-\tilde{q}_\ep(t),p(t)-\tilde{p}_\ep(t)\|_{\ell^2} = \O\left(\ep^2 \sqrt{|\ln(\ep)|}\right).
$$
Then the triangle inequality plus \eqref{checks} give
\bes
\sup_{|t| \le T_0 /\ep^3}{ \left\|q(\cdot,t) - \ep^2\left[ 
A(\ep(\cdot-t),\ep^3 t) + B(\ep(\cdot+t),\ep^3 t)
\right]\right\|_{\ell^2}}  = \O(\ep^2 \sqrt{|\ln(\ep)|}) 
\ees
and
\bes
\sup_{|t| \le T_0 /\ep^3}{\left\|p(\cdot,t) - \ep^2\left[ 
-A(\ep(\cdot-t),\ep^3 t) + B(\ep(\cdot+t),\ep^3 t)
\right]\right\|_{\ell^2}  }= \O(\ep^2 \sqrt{|\ln(\ep)|}) .
\ees
This is the absolute error estimate in the theorem. The relative error estimate follows from the estimate on $\beta_1(\ep)$.
\end{proof}

\section{Numerics}\label{numerics}

In this section we report the outcomes of a variety of numerical simulations of solutions of \eqref{FPUT}. In all cases our methodology is to truncate \eqref{FPUT} to $|j|\le M$ where $M \gg 1$ and enforce periodic boundary conditions ($M$ is always taken to be so incredibly vast that the solutions are never large anywhere near the edges of the computational domain). The resulting system is a large finite-dimensional ODE which we solve with a standard RK4 algorithm. This is essentially the same method as used in \cite{GMWZ,mcginnis-wright}. The calculations were performed in MATLAB.

\subsection{Amplitude attenuation}
The first experiment simulates \eqref{FPUT} with a number of choices for $m(j)$. These are:
\begin{itemize}
\item $m(j) =1$ for all $j$, that is, they are constant.
\item $m(j) = 1 + (-1)^j/4$, {\it i.e.} $2-$periodic.
\item $m(j)$ meet the transparency condition \eqref{transparency} where $\zeta(j)$ are drawn from the uniform distribution on $[-1/8,1/8]$.
\item $m(j)$ are i.i.d.~random variables, drawn uniformly from $[1/2,3/2]$.
\end{itemize}
For all these cases, we choose as initial conditions:
\be\label{ic1}
q(j,0) = 3\ep^2 \sech^2(\sqrt{6} \ep j) \mand p(j,0) = -3\ep^2 \sech^2(\sqrt{6} \ep j).
\ee
We  take $\ep =1/2, 1/4, 1/8$ and $1/16$ and simulate from $t=0$ out to $t=3/\ep^3$.

Famously, solutions of KdV equations with smooth and localized initial data will, over time, resolve into the sum of separated solitary waves of fixed amplitude \cite{drazin-johnson}. Thus, if the solution of the FPUT lattice is well-approximated by a KdV equation we 
expect the $\ell^\infty$-norm to at least roughly stabilize over long time periods. And so in Figure \ref{avt} we plot $\|q(\cdot,T/\ep^3),p(\cdot,T/\ep^3)\|_{\ell^\infty}/\ep^2$ vs $T$, for $0\le T\le3$. (The scaling here is
to be consistent with the long wave scaling so that we may compare various choices of $\ep$ on the same plot.) We see exactly this stabilization in the plots for the constant, $2$-periodic and transparent cases. Furthermore,  the stabilization becomes more pronounced as $\ep$ decreases, which is consistent with the rigorous KdV approximation theorems here and in \cite{schneider-wayne,bruckner-etal,GMWZ}. But when the masses are taken to be i.i.d., there is an obvious, pronounced decay of the amplitude; this attenuation (up to the scaling) becomes stronger as $\ep$ decreases. This is why we said in the Introduction that a KdV approximation for the i.i.d.~problem is not appropriate. 

\begin{figure}
	\centering
	\begin{subfigure}{.45\textwidth}
		\includegraphics[width=\textwidth]{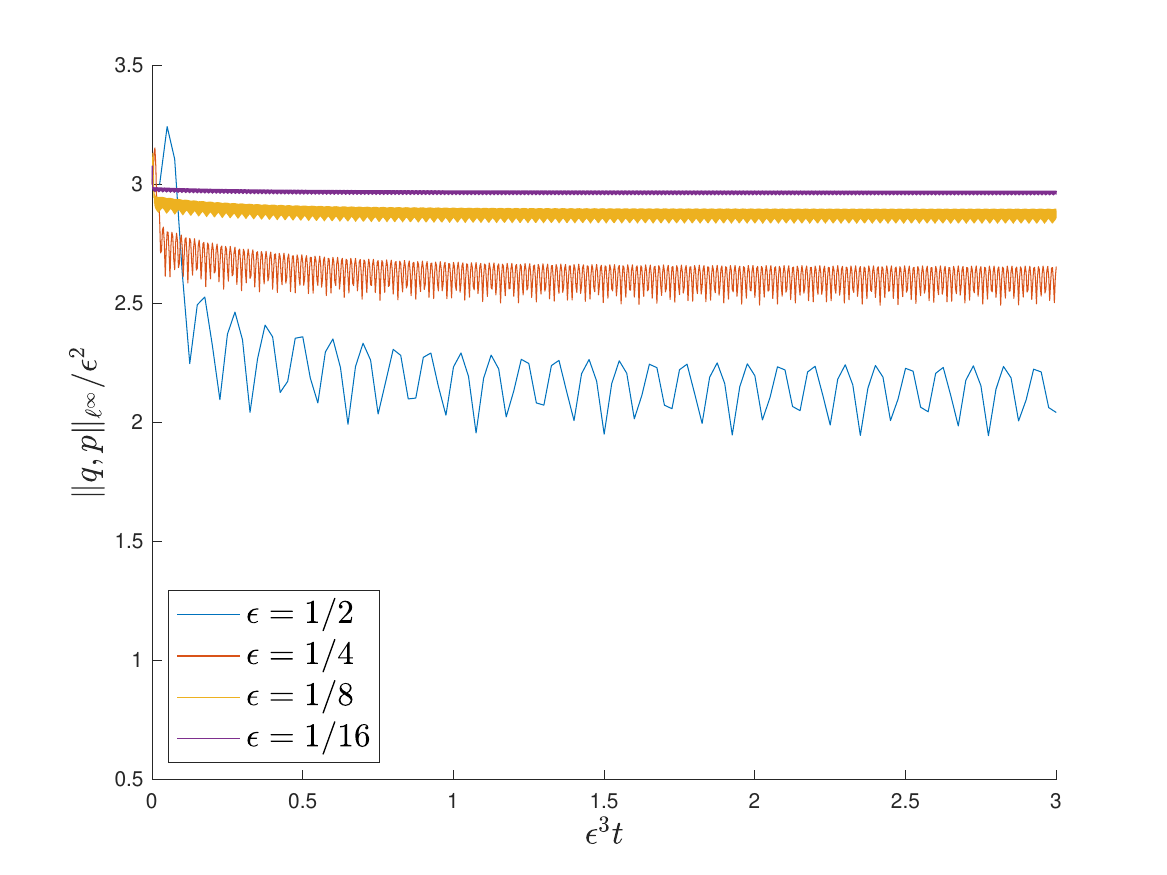}
		\caption{$m(j)$ are constant}
	\end{subfigure}
	\begin{subfigure}{.45\textwidth}
		\includegraphics[width=\textwidth]{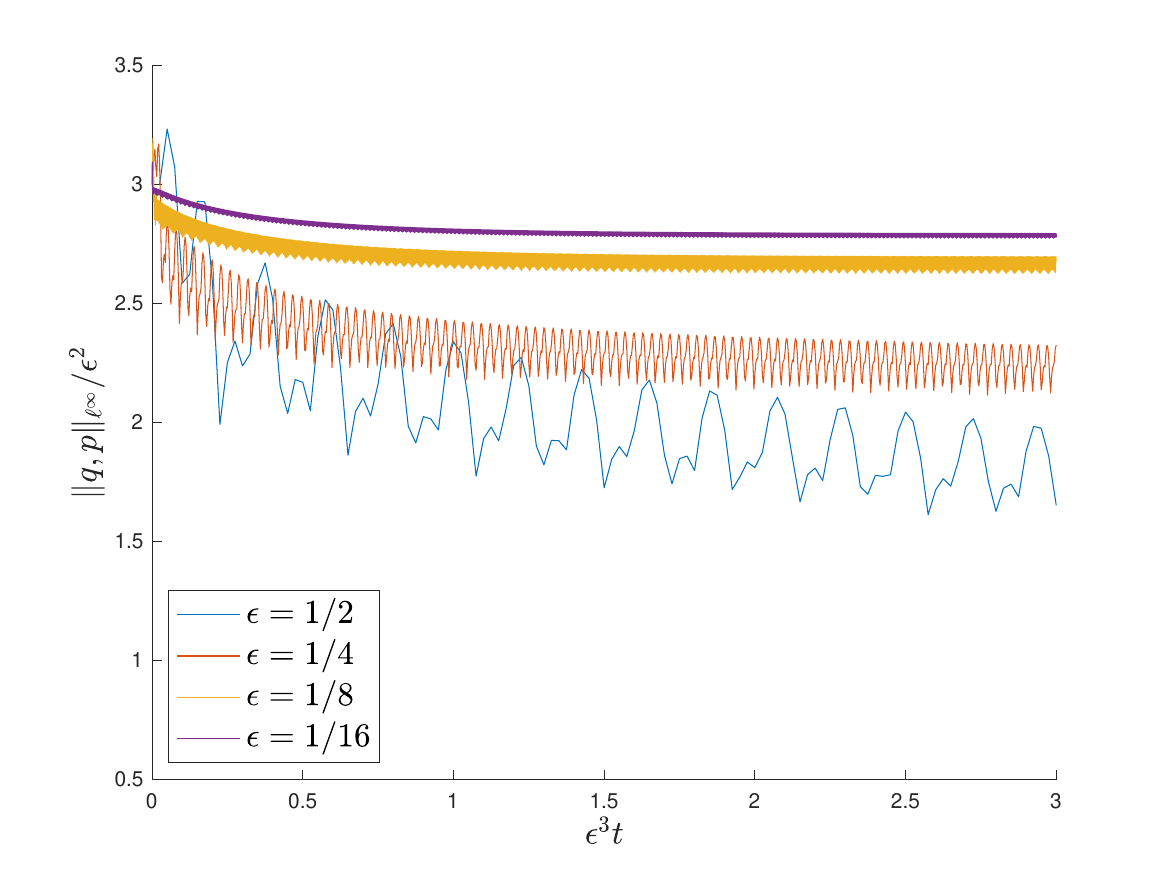}
		\caption{$m(j)$ are $2$-periodic}
	\end{subfigure}

	\begin{subfigure}{.45\textwidth}
		\includegraphics[width=\textwidth]{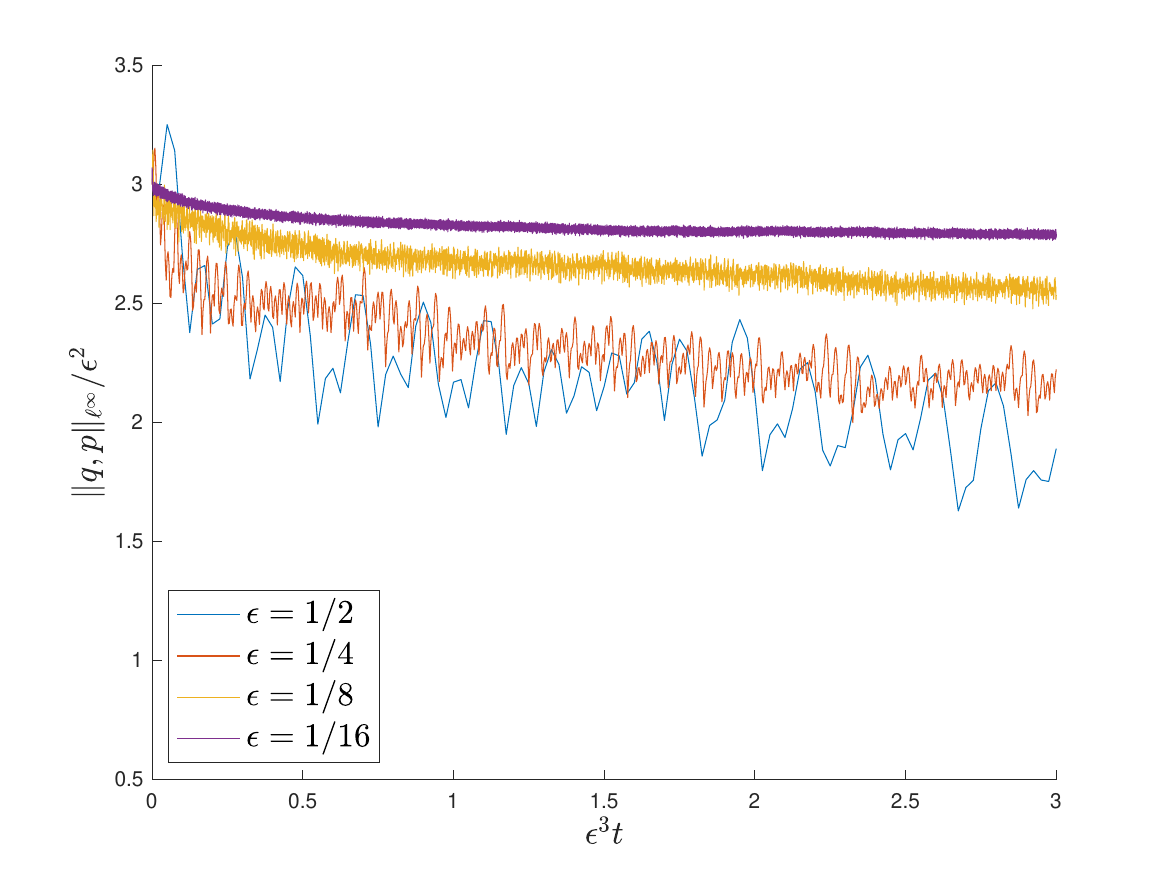}
		\caption{$m(j)$ are transparent}
	\end{subfigure}
	\begin{subfigure}{.45\textwidth}
		\includegraphics[width=\textwidth]{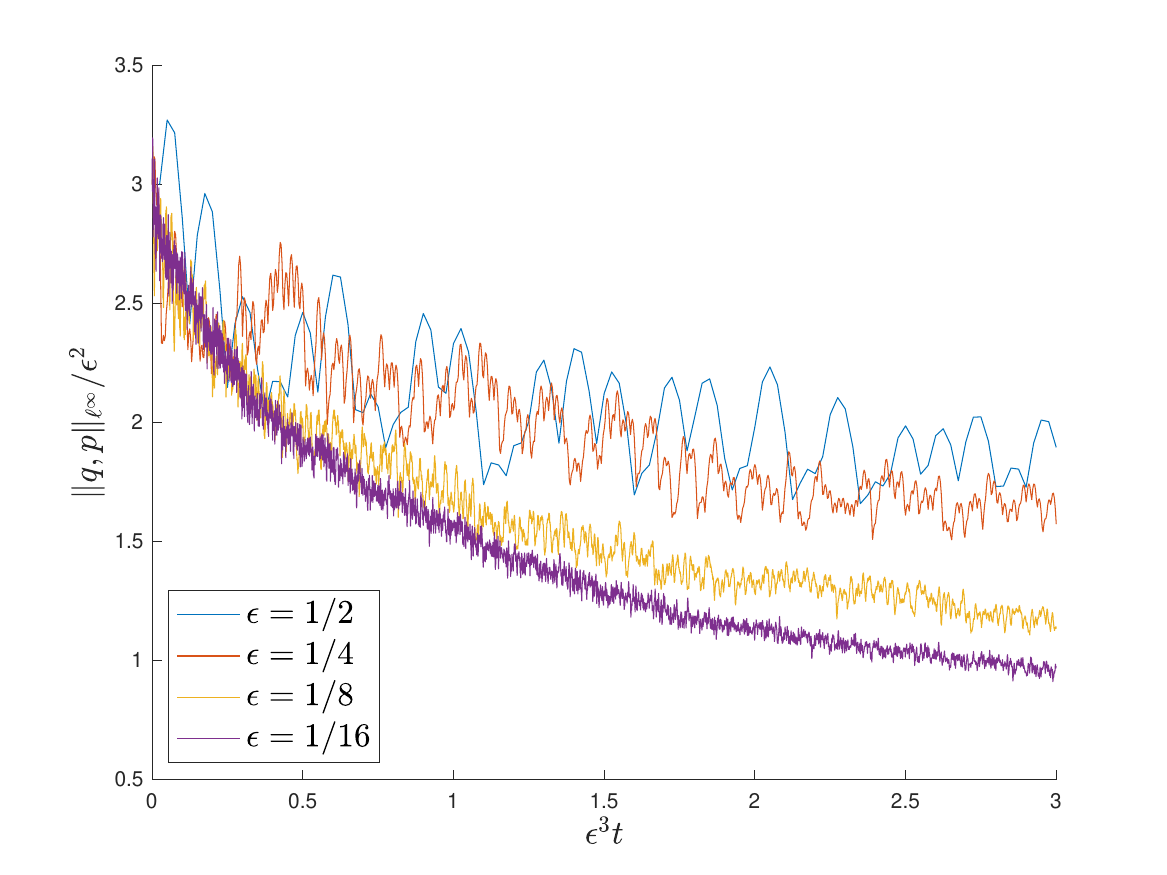}
		\caption{$m(j)$ are i.i.d.}
	\end{subfigure}
	
	\caption{Scaled $\ell^\infty$-amplitude vs scaled time for solutions of \eqref{FPUT} with long-wave data as in \eqref{ic1}.}\label{avt}
\end{figure}

\subsection{Numerical computation of optimal error bound}

In the second experiment we aim to corroborate the conclusions of our main result, Theorem \ref{main theorem}. We  simulate \eqref{FPUT} with $m(j)$ subject to the transparency condition \eqref{transparency} where $\zeta(j)$ are drawn from the uniform distribution on $[-1/8,1/8]$. In this case, $\sigma^2 = 1/192$.
We choose the initial data so that $B(l,T)$ is zero and $A(w,T)$ is an exact solitary wave solution of \eqref{KdVA}, namely
$3 \sech^2 \left({\sqrt{6 \over 1+ 24 \sigma^2} (w-T) }\right)$.
That is to say, we take
\be\label{ic2}
q(j,0) =3\ep^2 \sech^2\left({\sqrt{6 \over 1+ 24 \sigma^2} \ep j  }\right) \mand p(j,0) =-3\ep^2 \sech^2\left({\sqrt{6 \over 1+ 24 \sigma^2} \ep j  }\right).
\ee
\begin{figure}
	\centering
	\begin{subfigure}{.65\textwidth}
		\includegraphics[width=\textwidth]{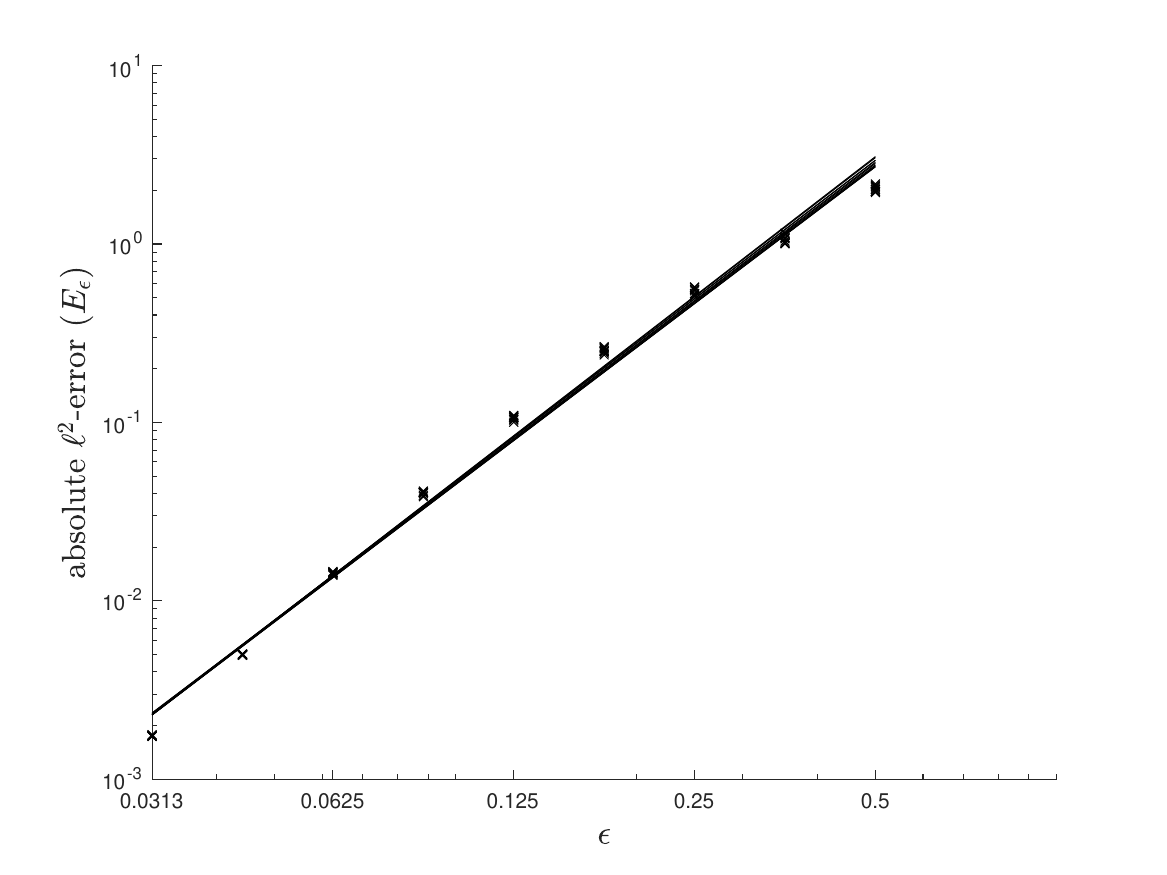}
	\end{subfigure}
	\caption{Ten realizations of absolute $\ell^2$-error vs $\ep$, loglog plot, for solutions of \eqref{FPUT} with ``solitary wave-like'' data as in \eqref{ic2}.}\label{eerrvsep}
\end{figure}
We simulate for $\ep = 2^{-l/2}$ where $l=2,\dots,10$ and run the simulations from $t = 0$ to $t = 3/\ep^3$. 
(When $\ep=1/32$ this takes a very long time!)
To be clear, we fix a realization and then vary $\ep$ as stated with the same realization used throughout.
Then we compute the overall absolute error, specifically:
\bes\begin{split}
E_\ep:=&\sup_{0 \le t \le T_0/\ep^3}
\left
\| q(\cdot,t) - 3 \ep^2 \sech^2 \left({\sqrt{6 \over 1+ 24 \sigma^2} \left(\ep(\cdot-t-\ep^2t)\right) }\right)
\right\|_{\ell^2}\\+
&\sup_{0 \le t \le T_0/\ep^3}
\left
\| p(\cdot,t) + 3 \ep^2 \sech^2 \left({\sqrt{6 \over 1+ 24 \sigma^2} \left(\ep(\cdot-t-\ep^2t)\right) }\right)
\right\|_{\ell^2}.
\end{split}\ees
Then we repeat for another realization (ten different realizations all together).
If we plot $E_\ep$ vs $\ep$ on a loglog plot, Theorem \ref{eerrvsep} tells us the best fit line to the data should have slope somewhere around $2$, or larger. The results are shown 
 in Figure \ref{eerrvsep}; all ten realizations on the same graph. The line of best fit has slope exceeding $2.5$ in each case; they are $2.5715$, $2.5982$,    $2.5677$,  $2.5445$, $2.5778$, $2.5948$, $2.5510$, $2.5760$, $2.5499$, $2.5563$.
  
This numerically computed slope is over $.5$ larger than what we expect from our rigorous estimate. That is to say, the numerics indicate that the approximation of the transparent mass FPUT lattice by KdV is a fair bit better than what our results from Theorem \ref{main theorem} show. We repeated the same experiment for many different realizations and the numerically computed slope was always near to $2.5$. Thus we conjecture that the absolute $\ell^2$-error is at worst $\O(\ep^{5/2})$, which is the same size as the error for the constant and periodic problems. Of course we do not know how to prove such a thing at this time.

\section{What's next.}\label{next}
Our results are the first piece of much larger program aimed at bringing stochastic homogenization to nonlinear dispersive problems. Here are a number of open problems, some of which should be relatively straightforward given our results here and others of which will require substantial new technical ideas.
\begin{enumerate}
\item Prove a result analogous to Theorem \ref{main theorem} but in expectation instead of in the almost sure sense. In our work on the linear i.i.d.~lattice \cite{mcginnis-wright} we proved approximation results in both senses and the strategy for the expectation result is almost surely transferable.
\item Study \eqref{FPUT} but allow spatial heterogeneity in the spring potentials as well. We expect an analogous transparency condition can be used to achieve a result similar to the one here.
\item Confirm (or reject!) the conjecture that the sharp order of the KdV approximation error for the transparent mass FPUT lattice is smaller than $\O(\ep^{5/2}$). The error estimate we prove here is due entirely to our use of the autoregressive processes in the extended approximation. But perhaps a yet more clever option exists to handle the terms which we encountered at $Z_{15}$ and $Z_{25}$.
\item If one can get the conjectured sharp error estimate, it opens the door to replacing the transparency condition \eqref{transparency} with  $$m(j) = 1+\delta^- \zeta(j).$$ 
Here there is only one finite-difference on $\zeta(j)$ (which are still i.i.d.) instead of two. Some preliminary simulations indicate this condition, which we call the {\it translucent random mass FPUT lattice}, should have a valid KdV approximation. 
\item The transparency condition is hardly natural or obvious.
This raises the question: are there conditions that one can place on $m(j)$ that are less rigid than the transparency (or translucency) conditions? Note that the transparency condition implies that the $m(j)$ are correlated with one another; perhaps strictures placed upon correlation lengths in $m(j)$ can be used to get results similar to our results here.
\item How can one rigorously capture the amplitude attenuation seen in the numerics for the i.i.d.~random mass problem in Figure \ref{avt}?
Clearly KdV is the wrong approach, but perhaps there is some other modulation equation that can capture the dynamics. The articles \cite{flach-etal-1,flach-etal-2} suggest the use of nonlinear diffusion equations. On the other hand, an analysis of a spectral problem associated with linear random lattices by \cite{besing} 
indicates a connection to Anderson localization and the authors of \cite{lukkarinen-spohn} utilize Boltzmann models in a high-dimensional linear version of \eqref{FPUT}.
\item Can our approach be carried over to the problem of long water waves over random bathymetry? Our transparency condition is inspired by a similar condition on the bathymetry proposed in \cite{papa-rosales}. Can a continuous version of an autoregressive process (that is, an {\it Ornstein-Uhlenbeck process}) be used to control the residuals in that problem and prove a rigorous approximation? 
\item How about nonlinear wave equations with random coefficients? Nonlinear Schr\"odinger equations (discrete or continuous)? Any of the multitude of equations named for Joseph Valentin Boussinesq? Or problems in higher spatial dimensions? 
\end{enumerate}

\bibliographystyle{plain}
\bibliography{transparent-fput}{}
\end{document}